\newtheorem{theorem}{Theorem}[section]
\newtheorem{lemma}[theorem]{Lemma}
\newtheorem{remark}[theorem]{Remark}
\renewenvironment{proof}[1][Proof]{\noindent\textbf{#1.} }{\ \rule{0.5em}{0.5em}}
\begin{document}
\title[Isoparametric hypersurfaces induced by navigation in Lorentz Finsler geometry]{Isoparametric hypersurfaces induced by navigation in Lorentz Finsler geometry}
\author{Ming Xu, Ju Tan and Na Xu}

\address{Ming Xu \newline
School of Mathematical Sciences,
Capital Normal University,
Beijing 100048,
P.R. China}
\email{mgmgmgxu@163.com}

\address{Ju Tan, corresponding author \newline
School of Mathematics and Physics,
Anhui University of Technology,
Maanshan 243032,
P.R. China}
\email{tanju2007@163.com}

\address{Na Xu\newline
School of Mathematics and Physics,
Anhui University of Technology,
Maanshan 243032,
P.R. China}
\email{xuna406@163.com}
\date{}

\begin{abstract}
Using a navigation process with the datum $(F,V)$, in which $F$ is a Finsler metric and the smooth tangent vector field $V$ satisfies $F(-V(x))>1$ everywhere, a Lorentz Finsler metric $\tilde{F}$
can be induced.
Isoparametric functions and isoparametric hypersurfaces with or without involving a smooth measure can be defined for $\tilde{F}$.
When the vector field $V$ in the navigation datum is homothetic,
we prove the local correspondences between isoparametric functions and isoparametric hypersurfaces before and after this navigation process. Using these correspondences, we provide some examples
of isoparametric functions and isoparametric hypersurfaces on
a Funk space of Lorentz Randers type.

\textbf{Mathematics Subject Classification (2010)}: 52B40, 53C42,
53C60

\textbf{Key words}: Finsler metric, homothetic vector field, isoparametric function, isoparametric hypersurface, Lorentz Finsler metric, Zermelo navigation
\end{abstract}
\maketitle
\section{Introduction}

A smooth function $f$ on a Riemannian manifold $(M,g)$ is called {\it isoparametric} if the pointwise norm of its gradient field $|\nabla f|$ and its Laplacian $\Delta f$ only depends on the values of $f$. The level set of a regular or critical value of $f$ is called an {\it isoparametric hypersurface} or a {\it focal subset} respectively \cite{CR2015}.
These notions have been extensively studied and have many applications in geometry and topology \cite{GT2012}\cite{QT2014}\cite{TY2017}. They were generalized to pseudo-Riemannian geometry and to high codimension during the 1980's \cite{Ha1984}\cite{Te1985}.
The classification of isoparametric hypersurfaces in unit spheres was
one of most celebrated geometric problem \cite{Ya1982}. This problem has a history of eighty years \cite{Ca1939}, and was completely solved by
Q.S. Chi in 2020 \cite{Ch2020}. See \cite{Xu2021} for more references.

In recent years, isoparametric theory has also been studied in Finsler geometry. Using the nonlinear analogs of gradient vector field and Laplacian operators, Q. He et al defined two types of isoparametric functions on a Finsler manifold $(M,F)$, i.e., the isoparametric functions involving a smooth measure $\mathrm{d}\mu$ \cite{HYS2016} (usually $\mathrm{d}\mu$ is taken to be the B.H. measure $\mathrm{d}\mu^F_{\mathrm{Bh}}$ of $F$),
and those without involving any smooth measure
\cite{HYR2021}\cite{Xu2018}. In this paper, they are specified as {\it $\mathrm{d}\mu$-isoparametric functions} and {\it isoparametric functions} respectively.  By a systematical approach in Finsler submanifold geometry, Q. He and her coworkers proved Cartan type formulae\cite{HYR2021}\cite{HYS2016} and
classified $\mathrm{d}\mu^F_{\mathrm{BH}}$-isoparametric hypersurfaces in  Minkowski spaces \cite{HYS2016}, Funk spaces \cite{HYS2017}, and Randers spheres with $K\equiv1$ \cite{HDY2020}.
Notice that these Finsler space forms have constant S-curvature \cite{SS2016}\cite{XMYZ2020}, so the isoparametric property coincides with the $\mathrm{d}\mu^F_{\mathrm{BH}}$-isoparametric property
\cite{Xu2018}.

From another view point, the classifications for isoparametric hypersurfaces in Funk spaces and Randers space forms  can be explained by the correspondence between isoparametric hypersurfaces before and after a {\it homothetic navigation} \cite{XMYZ2020}.  Let $(F,V)$ be the datum in which $V$ is a homothetic field for the Finsler metric $F$, and $\tilde{F}$ the Finsler metric induced by the corresponding navigation process where $F(-V(x))<1$.
Locally any hypersurface is $\mathrm{d}\mu^{\tilde{F}}_{\mathrm{BH}}$-isoparametric for the metric $\tilde{F}$ iff it is $\mathrm{d}\mu^F_{\mathrm{BH}}$-isoparametric for the metric $F$ (see Theorem 1.5 in \cite{XMYZ2020}). The correspondence between isoparametric functions is a little complicated,
but we may simplify it by only considering those isoparametric functions {\it normalized} at some $x_0\in M$ (see Theorem 7.5 in \cite{XMYZ2020}). See \cite{Fo2004}\cite{FM2018}\cite{HM2011}\cite{MH2007}\cite{XMYZ2020}
for more geometric correspondences for homothetic navigation in Finsler geometry.

This correspondence between  isoparametric hypersurfaces totally ignored the domain where $F(-V(x))>1$. In this domain, the navigation process with the datum $(F,V)$ induces
a (conic) Lorentz Finsler metric $\tilde{F}:\mathcal{A}\rightarrow\mathbb{R}_{>0}$, in which $\mathcal{A}=\coprod\mathcal{A}_x$ for all $x\in M$ with $F(-V(x))>1$ and $\mathcal{A}_x=\mathcal{A}\cap T_xM$ consists of all $\tilde{y}=y+F(x,y)V(x)$ satisfying
$\langle y,V(x)\rangle_y^F<-F(x,y)$ (see Lemma \ref{lemma-1}). Moreover, $\tilde{F}$ satisfies Condition (C), i.e., each nonempty $\mathcal{A}_x$ is convex in $T_xM$.
Condition (C) is
useful for us to uniquely define the gradient field $\nabla^{\tilde{F}}f$
of some regular real function $f$. This $\tilde{F}$ was found and studied by E. Caponio et al \cite{CJS2017}. The special case, {\it wind Riemannian} $\tilde{F}$ induced by Riemannian $F$, was studied by M.A. Javaloyes and M. S\'{a}nchez \cite{JS2017-1}\cite{JS2017-2}. The geodesic and flag curvature correspondences for a homothetic navigation in pseudo-Finsler geometry was explored by M.A. Javaloyes and H. Vit\'{o}rio \cite{JV2018}.


The $\mathrm{d}\mu$-isoparametric (and isoparametric)
functions can be generalized to Lorentz Finsler geometry,
using similar
gradient field $\nabla^{\tilde{F}} \tilde{f}$ and Laplacian $\Delta^{\tilde{F}} \tilde{f}$ (and $\Delta^{\tilde{F}}_{\mathrm{d} \mu}\tilde{f}$ respectively). See Section 3.2 for the precise
definitions.
The crucial issue here is that the gradient field $\nabla^{\tilde{F}} \tilde{f}$ for a general real regular function $f$ may not exist, and moreover it may not be unique. But for the situation we discuss below $\nabla^{\tilde{F}} \tilde{f}$ can be uniquely determined.

The main purpose of this paper is to prove the following main theorem, which generalizes
the correspondence between isoparametric hypersurfaces before and after a homothetic navigation with Finsler $F$ and Lorentz Finsler $\tilde{F}$, and provides some examples.

\begin{theorem}\label{main-thm}
Let $V$ be a homothetic vector field with dilation $c$
on the Finsler manifold $(M,F)$,
satisfying $F(-V(x))>1$ everywhere, and $\tilde{F}$ the conic Lorentz Finsler metric induced by the navigation process with the datum $(F,V)$. Denote $\Psi_t$ the flow of homothetic translations generated by $V$, $\mathrm{d}\mu^F_{\mathrm{BH}}$ the B.H. measure of $F$, and $\alpha_c(t)=t$ when $c=0$ or $\alpha_c(t)=\tfrac{e^{2ct}-1}{2c}$
otherwise.
Suppose $f$ is a normalized $\mathrm{d}\mu^F_{\mathrm{BH}}$-isoparametric (or isoparametric) function for $F$ around $x_0$, satisfying
$\langle \nabla^F f(x),V(x)\rangle^F_{\nabla^F f(x)}<-1$, then the function $\tilde{f}$ determined by
$\tilde{f}^{-1}(t)=\Psi_t(f^{-1}(\alpha_c(t)))$ around $x_0$ is a normalized $\mathrm{d}\mu^F_{\mathrm{BH}}$-isoparametric (or isoparametric) function for $\tilde{F}$. In particular,
$f^{-1}(0)=\tilde{f}^{-1}(0)$ is a $\mathrm{d}\mu^F_{\mathrm{BH}}$-isoparametric (or isoparametric respectively) hypersurface around $x_0$ for both $F$ and $\tilde{F}$.
\end{theorem}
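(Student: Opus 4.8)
The plan is to construct $\tilde f$ from $f$ geometrically, identify $\nabla^{\tilde F}\tilde f$ by an explicit navigation formula, and then push the transnormality and the (measure-theoretic) Laplacian condition across the navigation using the geodesic, shape-operator and B.H.-measure correspondences for a homothetic navigation established in the earlier sections. First I would check that $\tilde f$ is a well-defined smooth function near $x_0$. After the normalization, $f$ is a submersion on the regular part of a neighborhood $U$ of $x_0$, its level sets being a parallel family of hypersurfaces; the hypothesis $\langle\nabla^F f(x),V(x)\rangle^F_{\nabla^F f(x)}=\mathrm{d}f_x(V(x))<-1<0$ says that $V$ is everywhere transverse to these level sets and points in the direction of decreasing $f$. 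Since $\Psi_t$ is a flow, $\alpha_c(0)=0$ and $\alpha_c'(0)=1$, the map $(t,x)\mapsto\Psi_t(x)$ restricted to the pairs with $x\in f^{-1}(\alpha_c(t))$ is a local diffeomorphism at $(0,x_0)$; it therefore foliates a neighborhood of $x_0$, and $\tilde f$ is the smooth parameter function of this foliation, with $\tilde f(x_0)=0$ because $f(x_0)=0$ and $\Psi_0=\mathrm{id}$.

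Next I would determine the gradient. By Lemma \ref{lemma-1}, $\tilde F$ satisfies Condition (C), so on each $\mathcal A_x$ the gradient of a regular function is unique once it exists; the substance is existence together with an explicit formula. I would prove that at $p=\Psi_t(x)$ with $x\in f^{-1}(\alpha_c(t))$,
\[
\nabla^{\tilde F}\tilde f(p)\;=\;\lambda(x)\,(\Psi_t)_*\!\bigl(\nabla^F f(x)+F(x,\nabla^F f(x))\,V(x)\bigr)
\]
for a positive scalar $\lambda(x)$. The vector inside the brackets is the image of $\nabla^F f(x)$ under the navigation map of Lemma \ref{lemma-1}, and it lies in $\mathcal A_x$ precisely because of the hypothesis $\mathrm{d}f_x(V(x))<-1$ (after the normalization, $F(x,\nabla^F f(x))$ is controlled); this is exactly where the sign hypothesis enters, and it is what guarantees that $\nabla^{\tilde F}\tilde f$ exists and is $\tilde F$-admissible throughout the neighborhood. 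To verify the formula one checks the defining identity $g^{\tilde F}_{\nabla^{\tilde F}\tilde f}(\nabla^{\tilde F}\tilde f,\cdot)=\mathrm{d}\tilde f_p(\cdot)$: since $\mathrm{d}\tilde f_p\circ(\Psi_t)_*$ is proportional to $\mathrm{d}f_x$ (both annihilate the tangent space of the same $f$-level set and are comparably normalized), this reduces to the pointwise navigation identities relating $g^{\tilde F}_{\tilde y}$ and $g^F_y$ — equivalently, to the fact that the $\tilde F$-indicatrix at a point is the $F$-indicatrix shifted by $V$, so the functional $\mathrm{d}f_x$ is maximized over the $\tilde F$-indicatrix at the $V$-shift of its $F$-maximizer.

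With the gradient in hand I would prove transnormality and compute the Laplacian. Since $f$ is isoparametric, $F(x,\nabla^F f(x))=b(f(x))$ depends only on $f$, and the integral curves of $\nabla^F f/F(\nabla^F f)$ are reparametrized $F$-unit-speed geodesics meeting every level set $F$-orthogonally. Invoking the geodesic correspondence for the homothetic navigation (Javaloyes–Vit\'orio, reproduced earlier in the paper), the $\tilde F$-unit-speed geodesics $\gamma$ are exactly $\gamma(s)=\Psi_s(\bar\gamma(\alpha_c(s)))$ with $\bar\gamma$ an $F$-unit-speed geodesic; the rescaling factor of $\Psi_s$ on $F$-lengths and $\alpha_c'(s)=e^{2cs}$ are compatible, so the $\tilde F$-orthogonal geodesics to the hypersurfaces $\tilde f^{-1}(s)$ are precisely the $\Psi$-transports of the $F$-orthogonal geodesics to $f^{-1}(\alpha_c(s))$, and one reads off $\tilde F(\nabla^{\tilde F}\tilde f)=\tilde b(\tilde f)$ for some $\tilde b$; in particular the level sets of $\tilde f$ near $x_0$ form a parallel family for $\tilde F$. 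For the Laplacian, unwinding the definition from Section 3.2 as the $\mathrm{d}\mu$-divergence of $\nabla^{\tilde F}\tilde f$ and using the first-variation formula along the $\tilde F$-geodesic field, $\Delta^{\tilde F}_{\mathrm{d}\mu}\tilde f$ is a combination of the $s$-derivative of $\tilde F(\nabla^{\tilde F}\tilde f)$ — already a function of $\tilde f$ — and of the mean curvature of $\tilde f^{-1}(s)$ for the $\tilde F$-normal and the measure $\mathrm{d}\mu$. The latter I would control by combining (i) $\Psi_s$ being an $F$-homothety with dilation $c$, which rescales the second fundamental form and relates $\Psi_s^*\mathrm{d}\mu^F_{\mathrm{BH}}$ to $\mathrm{d}\mu^F_{\mathrm{BH}}$ by a universal factor, (ii) the navigation correspondence between the shape operators of a hypersurface for $F$ and for $\tilde F$, and (iii) the navigation correspondence between $\mathrm{d}\mu^F_{\mathrm{BH}}$ and $\mathrm{d}\mu^{\tilde F}_{\mathrm{BH}}$, together with the fact that the mean curvatures of $f^{-1}(\alpha_c(s))$ for $F$ depend only on $f$. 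Collecting the factors yields $\Delta^{\tilde F}_{\mathrm{d}\mu^{\tilde F}_{\mathrm{BH}}}\tilde f=\tilde a(\tilde f)$, so $\tilde f$ is a $\mathrm{d}\mu^{\tilde F}_{\mathrm{BH}}$-isoparametric function for $\tilde F$, normalized because the normalization of $f$ fixes the integration constants; the measure-free case follows identically with the reference measure of Section 3.2 in place of the B.H. measure. In particular $f^{-1}(0)=\Psi_0(f^{-1}(\alpha_c(0)))=\tilde f^{-1}(0)$ is isoparametric (resp. $\mathrm{d}\mu^{\tilde F}_{\mathrm{BH}}$-isoparametric) for both $F$ and $\tilde F$.

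The main obstacle is the bookkeeping in this last step. One composes two operations — the homothety $\Psi_s$, which is natural for $F$ but not for $\tilde F$, and the navigation, which relates $F$ and $\tilde F$ pointwise but not their connections — and must verify that the two genuinely nonlinear quantities $\tilde F(\nabla^{\tilde F}\tilde f)$ and $\Delta^{\tilde F}_{\mathrm{d}\mu}\tilde f$ pick up only $f$-dependent, hence $\tilde f$-dependent, factors. I expect the way to keep this tractable is to reduce every statement to an ODE in $s$ along a single transverse $\tilde F$-geodesic and then feed in the already-proved navigation correspondences for geodesics, shape operators and B.H. measures; the sign hypothesis $\mathrm{d}f(V)<-1$ and the normalization are used exactly to keep $\nabla^{\tilde F}\tilde f$ existing, unique and $\mathcal A$-valued throughout, and to pin down the constants so that $\tilde f$ emerges normalized.
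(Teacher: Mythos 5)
Your overall skeleton (define $\tilde f$ by transporting the level sets with $\Psi_t$, get transnormality from the geodesic correspondence, then compare Laplacians) matches the paper, but two of your concrete steps are wrong or rest on ingredients that do not exist. First, your explicit gradient formula is incorrect in the genuinely homothetic case $c\neq 0$. Differentiating $\tilde\gamma(t)=\Psi_t(\gamma(\alpha_c(t)))$ gives
\[
\nabla^{\tilde F}\tilde f(\tilde x)=\dot{\tilde\gamma}(t)=(\Psi_t)_*\bigl(e^{2ct}\nabla^F f(x)\bigr)+V(\tilde x)
=(\Psi_t)_*\bigl(e^{2ct}\nabla^F f(x)+V(x)\bigr),
\]
because $\alpha_c'(t)=e^{2ct}$ and $(\Psi_t)_*V=V\circ\Psi_t$; here $e^{2ct}(\Psi_t)_*\nabla^F f(x)$ is again $F$-unit, so this is the navigation image taken \emph{at} $\tilde x$. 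Your vector $\lambda(x)(\Psi_t)_*(\nabla^F f(x)+V(x))$ is the pushforward of the navigation image taken at $x$; since $\Psi_t$ rescales $F$ by $e^{-2ct}$ but fixes $V$, navigation and $(\Psi_t)_*$ do not commute, and for $t\neq0$, $c\neq0$ and $V(x)\notin\mathbb{R}\nabla^F f(x)$ your vector points in a different direction, so no scalar $\lambda$ can repair it (the gradient is unique by Condition (C)). The factor $e^{2ct}=2cf+1$ that you lose here is exactly what produces the $(2cf+1)$-factors in the paper's Laplacian comparisons (Lemmas \ref{lemma-7}, \ref{lemma-9}), so the error propagates into your bookkeeping step rather than washing out.

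Second, your treatment of the Laplacian condition invokes correspondences the paper neither has nor can have: you conclude that $\tilde f$ is $\mathrm{d}\mu^{\tilde F}_{\mathrm{BH}}$-isoparametric using ``the navigation correspondence between $\mathrm{d}\mu^F_{\mathrm{BH}}$ and $\mathrm{d}\mu^{\tilde F}_{\mathrm{BH}}$'' and a shape-operator correspondence for the Lorentz navigation. But $\tilde F$ is Lorentz Finsler, its ``unit ball'' $\{\tilde F\le1\}\cap\mathcal A_x$ is unbounded, so $\mathrm{d}\mu^{\tilde F}_{\mathrm{BH}}$ is not defined (the paper states this explicitly), and the theorem is about the fixed background measure $\mathrm{d}\mu^{F}_{\mathrm{BH}}$ for \emph{both} metrics; likewise no shape-operator correspondence for the Finsler--Lorentz-Finsler navigation is established in this paper (only geodesics, Lemma \ref{lemma-2}, and S-curvature, Lemma \ref{lemma-8}). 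The paper instead computes $\Psi^*\Delta^{\tilde F}_{\mathrm{d}\mu^F_{\mathrm{BH}}}\tilde f=(2cf+1)\Delta^F_{\mathrm{d}\mu^F_{\mathrm{BH}}}f-2cn$ directly from $\Psi^*\mathrm{d}\mu^F_{\mathrm{BH}}$ and the corrected gradient formula, and handles the measure-free case not ``identically'' but via the S-curvature identities $\Delta=\Delta_{\mathrm{d}\mu}+S_{\mathrm{d}\mu}(\cdot,\nabla f)$ (Lemmas \ref{lemma-5}, \ref{lemma-6}) together with Lemmas \ref{lemma-11} and \ref{lemma-8}; your proposal needs an argument of this kind (or a proof of the missing shape-operator correspondence) before the final ``collecting the factors'' step is justified. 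Your well-definedness discussion of $\tilde f$ should also use $\langle\nabla^F f,V\rangle^F_{\nabla^F f}<-1$ quantitatively, as in Lemma \ref{lemma-4}: mere transversality of $V$ to the level sets would still allow $1+\langle\nabla^F f,V\rangle^F_{\nabla^F f}=0$, where the glued map $\Psi$ degenerates.
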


To prove Theorem \ref{main-thm}, we use the geodesic correspondence for homothetic navigation to prove $\tilde{f}$ is a normalized transnormal function for $\tilde{F}$ (see Lemma \ref{lemma-12}).
A comparison between $\Delta^F_{\mathrm{d} \mu^F_{\mathrm{BH}}} f$ (see Lemma \ref{lemma-7}) and $\Delta^{\tilde{F}}_{\mathrm{d} \mu^F_{\mathrm{BH}}} \tilde{f}$ indicates $\tilde{f}$ is $\mathrm{d}\mu^F_{\mathrm{BH}}$-isoparametric iff $f$ is $\mathrm{d}\mu^F_{\mathrm{BH}}$-isoparametric.

To find the relation between $\Delta^Ff$ and $\Delta^{\tilde{F}}\tilde{f}$, and prove $\tilde{f}$ is isoparametric for $\tilde{F}$ when $f$ is for $F$, we need a few more ingredients. For the Finsler metric $F$ and the transnormal function $f$,
$\Delta^F f$ only differs from $\Delta^F_{\mathrm{d} \mu^F_\mathrm{BH}} f$ by the S-curvature $S^F_{\mathrm{d} \mu^F_{\mathrm{BH}}}(x,\nabla^F f(x))$ (see Lemma 14.1.2 in \cite{Sh2001} or Lemma \ref{lemma-5} below).
For the Lorentz Finsler metric $\tilde{F}$ and the transmoral function $\tilde{f}$, we can prove a similar equality (see Lemma \ref{lemma-6}). The S-curvatures for $F$ and $\tilde{F}$ are related by Lemma \ref{lemma-8}, which is an analog of Theorem 1.3 in \cite{XMYZ2020}. Then the comparison between
$\Delta^F f$ and $\Delta^{\tilde{F}}\tilde{f}$ can be easily summarized from above observations (see Lemma \ref{lemma-9}).


\begin{remark}\label{remark-0}
We believe that almost the same argument can also prove the backward correspondence, i.e., when $\tilde{f}$ in Theorem \ref{main-thm} is a $\mathrm{d}\mu^F_{\mathrm{BH}}$-isoparametric (or isoparametric) function for $\tilde{F}$ normalized at $x_0$, then $f$ is a $\mathrm{d}\mu^F_{\mathrm{BH}}$-isoparametric (or isoparametric for $F$ respectively) function normalized at $x_0$. Isoparametric functions, isoparametric hypersurfaces, and their correspondence for homothetic navigation can be further generalized to pseudo-Finsler geometry with arbitrary signature type, as long as the existence and choice for the gradient field can be settled.
\end{remark}

Theorem \ref{main-thm} can provide many examples of isoparametric
functions and isoparametric hypersurfaces in Lorentz Finsler geometry. Take a Lorentz Funk metric $\tilde{F}$ for example. When $\tilde{F}$ is a Lorentz Randers metric defined on the complement $M$ of the closed unit ball in
 $\mathbb{R}^n$, it is induced by a homothetic navigation with the datum $(F,V)$, in which $F$ is the standard Euclidean metric and $V(x)=-x$. By \cite{Se1926} and Theorem \ref{main-thm}, hyperplanes, spheres and spherical cylinders provide isoparametric hypersurfaces for $\tilde{F}$ around any $x_0$ with $|x_0|>1$. See Section 4.2 for the precise description of these examples and some explicit isoparametric functions. When $\tilde{F}$ is not Randers, it is induced by the homothetic navigation with the datum $(F,V)$, in which $F$ is a Minkowski metric and $V(x)=-x$ is the same. Then we may use Theorem \ref{main-thm} and \cite{HYS2016} to similarly find isoparametric hypersurfaces for $\tilde{F}$.

This paper is organized as following. In Section 2, we recall some preliminary knowledge on Finsler metric, Lorentz Finsler metric, navigation process and geometric correspondences for homothetic navigation. In Section 3,
we recall the definitions of isoparametric functions and isoparametric hypersurfaces for Finsler metrics (with or without a smooth measure involved), and generalize these notions to Lorentz Finsler metrics. In Section 4, we prove  Theorem \ref{main-thm} and provide some examples
of isoparametric functions and isoparametric hypersurfaces in Funk space of Lorentz Randers type.

\section{Preliminaries}

\subsection{Finsler metric and Lorentz Finsler metric}


A Finsler metric on $M$ is a continuous function $F:TM\rightarrow\mathbb{R}$ satisfying:
\begin{enumerate}
\item the positiveness and smoothness, i.e., $F$ is positive and smooth when restricted to $TM\backslash0$;
\item the positive 1-homogeneity, i.e., $F(x,\lambda y)=\lambda F(x,y)$
for any $x\in M$, $y\in T_xM\backslash\{0\}$ and $\lambda\geq0$;
\item the strong convexity, i.e., for any $y\in T_xM\backslash\{0\}$, the {\it fundamental tensor}
$$\langle u,v\rangle_y^F=\frac12\frac{\partial^2}{\partial s\partial t}|_{s=t=0}
F^2(y+su+tv)$$
is an inner product on $T_xM$.
\end{enumerate}
Using standard local coordinates $(x^i,y^i)$, i.e., $x=(x^i)\in M$ and $y=y^i\partial_{x^i}\in T_xM$, the fundamental tensor can also be presented as $\langle u,v\rangle^F_{y}=g_{ij}u^iv^j$, where
$u=u^i\partial_{x^i}$, $v=v^j\partial_{x^j}$ and  $(g_{ij})=(\frac12[F^2]_{y^iy^j})$ is the Hessian metric \cite{BCS2000}.

More generally,
a ({\it conic}) {\it pseudo-Finsler metric} on $M^n$ is
defined on a conic open subset $\mathcal{A}$ in $TM\backslash0$. The {\it conic} property of $\mathcal{A}\subset TM\backslash0$ is referred to $\mathcal{A}_x=\mathbb{R}_{>0}\mathcal{A}_x$
for each $\mathcal{A}_x=\mathcal{A}\cap T_xM$.
We call the function
$F:\mathcal{A}\rightarrow\mathbb{R}_{>0}$ a {\it pseudo-Finsler metric} on $M$ if the positiveness, smoothness and positive 1-homogeneity are satisfied, and the fundamental tensor $\langle\cdot,\cdot\rangle_y^F$ for any $y\in\mathcal{A}$ is non-degenerate \cite{JV2018}. See \cite{Ko2011} for
the original of pseudo-Finsler metrics in physics.

In this paper, we will study a pseudo-Finsler metric $F:\mathcal{A}\rightarrow\mathbb{R}_{>0}$ satisfying the following properties:
\begin{enumerate}
\item the signature type of $\langle\cdot,\cdot\rangle_y^F$ is $(1,n-1)$ everywhere;
\item it satisfies the following {\it Condition (C)}:  for every $x\in M$, $\mathcal{A}_x=\mathcal{A}\cap T_xM$ is convex.
\end{enumerate}
We will simply call this $F$ a {\it Lorentz Finsler metric satisfying Condition (C)}.


Many geometric notions in Finsler geometry \cite{BCS2000}
can be naturally generalized to pseudo-Finsler geometry. 

The {\it geodesic spray} $G$ is a smooth tangent vector field on $\mathcal{A}$. With respect to standard
local coordinates $(x^i,y^i)$, the geodesic spray has the presentation $$G=y^i\partial_{x^i}-2G^i\partial_{y^i}, \mbox{ in which } G^i=\frac14g^{il}([F^2]_{x^ky^l}y^k-[F^2]_{x^l}).$$

The smooth curve $c(t)$ is called a {\it geodesic} ({\it with positive constant speed}) if its lifting $(c(t),\dot{c}(t))$ is an integral curve of
the geodesic spray $G$. Using standard local coordinates, the geodesic
$c(t)=(c^i(t))$ is determined by the ODE system
 $$\ddot{c}^i(t)+2G^i(c(t),\dot{c}(t))=0,\quad\forall i.$$

Let $\mathrm{d}\mu$ be a smooth measure with local coordinates presentation
$\mathrm{d}\mu=\sigma(x)\mathrm{d}x^1\cdots \mathrm{d}x^n$, in which $\sigma(x)$ is a positive smooth function. Then
the S-curvature $S^F_{\mathrm{d} \mu}:\mathcal{A}\rightarrow \mathbb{R}$ for $F$ and $\mathrm{d}\mu$
is the derivative of the distortion function
$$\tau^F_{\mathrm{d} \mu}(x,y)=\ln\left(\frac{\sqrt{|\det(g_{ij}(x,y))|}}{\sigma(x)}\right)$$ in the direction of the geodesic spray $G$.

\begin{remark}
Notice that in Finsler geometry, there is a canonical smooth measure called the B.H. measure, i.e., $\mathrm{d}\mu^F_{\mathrm{BH}}=\sigma(x)
\mathrm{d}x^1\cdots\mathrm{d}x^n$ with
$$\sigma(x)=\frac{\mathrm{Vol}(\{(y^i)| (\sum_{i=1}^n (y^i)^2)^{1/2}\leq1\})}{
\mathrm{Vol}(\{(y^i)|F(x,y^i\partial_{x^i})\leq1\})},$$
where $\mathrm{Vol}(\cdot)$ is the standard measure in the Euclidean space $\mathbb{R}^n$. Generally speaking, B.H. measure is not well defined in pseudo-Finsler geometry.
\end{remark}
\subsection{Finsler and Lorentz Finsler metrics induced by navigation}

Consider a Finsler metric $F$ on $M$ and a smooth tangent field $V$.
Then the Zermelo navigation process formally defines a new metric $\tilde{F}$ by the equality
\begin{equation}\label{001}
F(x,y)=\tilde{F}(x,y+F(x,y)V(x)),\quad\forall x\in M,y\in
T_xM\backslash\{0\}.
\end{equation}
Geometrically, we parallelly shift the indicatrix $S_x^FM=\{y\in T_xM\mbox{ with }F(x,y)=1\}$ of $F$
by the vector $V(x)$ in $T_xM$, then we get another indicatrix which uniquely determines the metric $\tilde{F}$.

To be precise, the new metric $\tilde{F}$ is still a Finsler metric where $F(-V(x))<1$ is satisfied.
For example, the navigation presentation for the Finsler metric $\tilde{F}$ of Randers type is
\begin{equation*}
\tilde{F}(x,y)=
\frac{\sqrt{(1-\langle V(x),V(x)\rangle)\langle y,y\rangle+\langle y,V(x)\rangle^2}
-\langle V(x),y\rangle}{1-\langle V(x),V(x)\rangle}.
\end{equation*}
Here $\langle\cdot,\cdot\rangle=F(\cdot)^2$ is a Riemannian metric and $V$ is
a tangent field satisfying $\langle V(x),V(x)\rangle<1$ everywhere.

The relation between the fundamental tensors $\langle\cdot,\cdot\rangle_y^F$ and $\langle\cdot,\cdot\rangle^{\tilde{F}}_{\tilde{y}}$ with $F(x,y)=\tilde{F}(x,\tilde{y})=1$ can be revealed by taking the mixed second derivative of (\ref{001}) with respect to the smooth parameters $s$ and $t$ in $y=y(s,t)\in
S^F_xM$ with $\frac{\partial}{\partial s}y(0,0)=u$ and $\frac{\partial}{\partial t}y(0,0)=v$ in $T_y(S^F_xM)=T_{\tilde{y}}
(S^{\tilde{F}}_xM)$, and evaluating it at $s=t=0$, i.e.,
\begin{equation}\label{003}
\langle u,v\rangle^F_y=\frac{1}{1-\langle\tilde{y},V(x)
\rangle^{\tilde{F}}_{\tilde{y}}}
\langle u,v\rangle^{\tilde{F}}_{\tilde{y}}.
\end{equation}
Using the inverse navigation process, $F$ can be induced by the datum $(\tilde{F},-V)$, so we also have
\begin{equation}\label{004}
\langle u,v\rangle^{\tilde{F}}_{\tilde{y}}=
\frac{1}{1+\langle y,V(x)\rangle^F_{y}}\langle u,v\rangle^F_y.
\end{equation}

Notice that above calculation and formulae for fundamental tensors are also valid when $\tilde{F}$ is conic Finsler or conic Lorentz Finsler.

If we have $F(-V(x))\equiv 1$, the induced $\tilde{F}$ is a conic Finsler metric metric. For example, the navigation presentation for the Kropina metric $\tilde{F}$
is $$F(x,y)=\frac{\langle y,y\rangle}{2\langle y,V(x)\rangle},\quad\forall x\in M,y\in T_xM\mbox{ with }\langle y,V(x)\rangle>0,$$
where $\langle \cdot,\cdot\rangle=F(\cdot)^2$ is a Riemannian metric and the tangent field $V$ satisfies
$F(V(x))\equiv1$ everywhere.

The case that $F(-V(x))>1$ is the main concern in this paper. Figure 2 in \cite{JV2018} visualizes the two conic metrics induced by
the navigation process. In later discussion, we just ignore the conic Finsler one and concentrate in the Lorentz Finsler one. Rigorous description for this Lorentz Finsler metric $\tilde{F}$ satisfying Condition (C), induced by the navigation process, is given by the following lemma.

\begin{lemma}\label{lemma-1}
Let $F$ be a Finsler metric and $V$ a smooth tangent field satisfying
$F(-V(x))>1$ everywhere on $M$. Then (\ref{001}), i.e., $F(x,y)=\tilde{F}(x,y+F(x,y)V(x))$ for $y\in T_xM$ satisfying $\langle y,V(x)\rangle^F_y<-F(x,y)$, defines a Lorentz
Finsler metric $\tilde{F}:\mathcal{A}\rightarrow\mathbb{R}_{>0}$,
such that each $\mathcal{A}_x=\mathcal{A}\cap T_xM$ is the nonempty conic convex open subset in $T_xM$ consisting
of all $\tilde{y}=y+F(x,y)V(x)\in T_xM$ with $\langle y,V(x)\rangle^F_y<-F(x,y)$.
\end{lemma}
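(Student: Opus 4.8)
The plan is to work fiberwise: fix $x \in M$ with $F(-V(x)) > 1$, write $v = V(x)$, $g = \langle\cdot,\cdot\rangle^F$, and analyze the map $\Phi_x : T_xM \setminus \{0\} \to T_xM$ given by $\Phi_x(y) = y + F(x,y)v$, restricted to the conic open set $\mathcal{U}_x = \{y \in T_xM\setminus\{0\} : \langle y,v\rangle^F_y < -F(x,y)\}$. First I would check that $\mathcal{U}_x$ is nonempty, conic and open: it is conic because both $\langle y,v\rangle^F_y$ and $F(x,y)$ are positively $1$-homogeneous in $y$, open because both sides depend continuously on $y$, and nonempty because taking $y_0 = -v/F(-v)$ (the point where the ray $\mathbb{R}_{>0}(-v)$ meets $S^F_xM$) gives $F(x,y_0) = 1$ and $\langle y_0, v\rangle^F_{y_0} = -\langle y_0, F(-v)y_0\rangle^F_{y_0} \cdot F(-v)^{-1}$... more carefully, since $F(-v) > 1$ one checks $-v$ itself satisfies $\langle -v, v\rangle^F_{-v} = -F(-v)^2 < -F(-v) = -F(x,-v)$, so $-v \in \mathcal{U}_x$. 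Next I would record that $\mathcal{A}_x := \Phi_x(\mathcal{U}_x)$ is exactly the set of $\tilde{y} = y + F(x,y)v$ with $y \in \mathcal{U}_x$, and that on this set the defining relation $F(x,y) = \tilde{F}(x,\tilde{y})$ unambiguously specifies a value $\tilde{F}(x,\tilde{y}) > 0$ provided $\Phi_x$ is injective on $\mathcal{U}_x$; injectivity follows because $\Phi_x(y) = \tilde{y}$ forces $F(x,y)$ to be the unique $\lambda > 0$ with $F(x, \tilde{y} - \lambda v) = \lambda$, and the function $\lambda \mapsto F(x,\tilde{y} - \lambda v) - \lambda$ is strictly decreasing (its derivative is $\langle \tilde{y} - \lambda v, -v\rangle^F / F(\cdots) - 1 < 0$ since $F(-v) > 1$ implies $|\langle w, v\rangle^F_w| < F(x,w)$ fails only... actually one uses $\langle w,-v\rangle^F_w \le F(x,w) F(-v)$... let me just say strict monotonicity is a direct computation), hence $\lambda$, and then $y = \tilde{y} - \lambda v$, are determined.

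Then I would verify the three defining properties of a pseudo-Finsler metric for $\tilde{F}$ on $\mathcal{A} = \coprod_x \mathcal{A}_x$. Positive $1$-homogeneity of $\tilde{F}$ in $\tilde{y}$ follows from that of $F$ together with $\Phi_x(\lambda y) = \lambda \Phi_x(y)$ for $\lambda > 0$. Smoothness and positivity on $\mathcal{A}\setminus 0$ follow from the implicit function theorem applied to $\Psi(x,\tilde{y},\lambda) = F(x,\tilde{y}-\lambda v) - \lambda = 0$: the partial derivative $\partial_\lambda \Psi = -\langle \tilde{y}-\lambda v, v\rangle^F/F(x,\tilde{y}-\lambda v) - 1$ is nonzero (indeed negative) on the relevant region, so $\lambda = \tilde{F}(x,\tilde{y})$ is smooth, and smoothness in $x$ comes along for free since $V$ and $F$ are smooth. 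The nondegeneracy and the signature $(1,n-1)$ of the fundamental tensor $\langle\cdot,\cdot\rangle^{\tilde{F}}_{\tilde{y}}$ is the heart of the matter: here I would use formula (\ref{004}), $\langle u,v\rangle^{\tilde{F}}_{\tilde{y}} = \frac{1}{1 + \langle y, V(x)\rangle^F_y}\langle u,v\rangle^F_y$ on the tangent space $T_{\tilde y}(S^{\tilde F}_x M) = T_y(S^F_x M)$, combined with homogeneity to extend to all of $T_xM$. The scalar factor $1 + \langle y, v\rangle^F_y$ (evaluated at the point $y/F(x,y)$ of the indicatrix, strictly speaking) is \emph{negative} precisely on $\mathcal{U}_x$ by its very definition, so the conformal-type factor flips the sign of $g$ restricted to the indicatrix tangent space while the behavior in the radial ($\tilde y$) direction — where $\langle \tilde y, \tilde y\rangle^{\tilde F}_{\tilde y} = \tilde F(x,\tilde y)^2 > 0$ — is unchanged; this produces exactly one positive and $n-1$ negative eigenvalues, i.e. signature $(1,n-1)$. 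I would carry this out by decomposing $T_xM = \mathbb{R}\tilde y \oplus T_{\tilde y}(S^{\tilde F}_x M)$, noting $\langle\cdot,\cdot\rangle^{\tilde F}_{\tilde y}$-orthogonality of the two summands (an Euler-relation identity), and reading off the signature from the two pieces.

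Finally I would establish Condition (C), that $\mathcal{A}_x$ is convex. The cleanest route is: $\mathcal{A}_x = \{\tilde y \in T_xM\setminus\{0\} : F(x, \tilde y - \lambda v) < \lambda \text{ for some } \lambda > 0 \text{ with } \tilde{y} - \lambda v \text{ on the correct side}\}$ — more usefully, I would show $\mathcal{A}_x$ is a connected component of the conic complement of the region where the navigation stays Finslerian, and characterize it as $\{\tilde y : \langle \tilde y - \tilde F v, v \rangle < \cdots\}$. Actually the honest approach: $\tilde{y} \in \mathcal{A}_x$ iff the ray $\{\tilde y - s v : s \ge 0\}$ meets the open unit ball $B^F_x = \{F(x,\cdot) < 1\}$ scaled appropriately — equivalently, by $1$-homogeneity, iff $\tilde y$ lies in the cone over $\{w - v : F(x,w) < 1\} = B^F_x - v$. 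Since $B^F_x$ is convex (strong convexity of $F$) and translation preserves convexity, $B^F_x - v$ is convex, and $0 \notin B^F_x - v$ because $F(-v) > 1$ means $v \notin B^F_x$; the cone $\mathbb{R}_{>0}(B^F_x - v)$ over a convex set missing the origin is convex, which gives $\mathcal{A}_x$ convex and conic, and one checks this cone description matches $\{\tilde y = y + F(x,y)v : \langle y,v\rangle^F_y < -F(x,y)\}$. I expect the main obstacle to be the bookkeeping that makes the signature computation and the ``cone over $B^F_x - v$'' identification rigorous simultaneously — in particular pinning down that $\Phi_x$ really is a diffeomorphism from $\mathcal{U}_x$ onto $\mathcal{A}_x$ and that the sign of $1 + \langle y, v\rangle^F_y$ on the indicatrix is governed exactly by the inequality cutting out $\mathcal{U}_x$ — rather than any single hard estimate.
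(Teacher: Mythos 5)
Your overall plan coincides with the paper's (fiberwise analysis of $\Phi_x(y)=y+F(x,y)V(x)$, smoothness via the implicit relation $F(x,\tilde y-\lambda V(x))=\lambda$, signature from formula (\ref{004}) on $T_{\tilde y}S^{\tilde F}_xM$ plus the radial direction, convexity from a cone-over-translated-ball description), and your nonemptiness check ($-V(x)\in\mathcal{U}_x$ since $\langle -V(x),V(x)\rangle^F_{-V(x)}=-F(-V(x))^2<-F(-V(x))$) and signature argument are correct and essentially identical to the paper's. But the injectivity/well-definedness step is genuinely broken. The function $h(\lambda)=F(x,\tilde y-\lambda V(x))-\lambda$ is \emph{not} strictly decreasing: it is convex (by the Cauchy--Schwarz computation the paper records in (\ref{030})), with $h(0)=F(x,\tilde y)>0$ and $h(\lambda)\sim\lambda\,(F(-V(x))-1)\to+\infty$, so for $\tilde y\in\mathcal{A}_x$ it generically has \emph{two} positive zeros --- this is exactly the two-branch (conic Finsler / Lorentz Finsler) phenomenon of strong-wind navigation. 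Hence ``$F(x,y)$ is the unique $\lambda>0$ with $F(x,\tilde y-\lambda V(x))=\lambda$'' is false, and at the zero belonging to the Lorentz branch the derivative is $-\langle y,V(x)\rangle^F_y/F(x,y)-1>0$, positive rather than negative as you claim; the same sign error sits in your IFT step, where $\partial_\lambda\Psi>0$ on the relevant set (nonzero suffices for the IFT, but the reason is the branch inequality $\langle y,V(x)\rangle^F_y<-F(x,y)$, not $F(-V(x))>1$). What is true, and what the paper proves, is that \emph{at most one} zero of $h$ can satisfy $h'>0$ (equivalently the branch inequality): this uses convexity of $\psi(t)=F(x,\tilde y-tV(x))$ together with the mean value theorem, and your hedged ``strict monotonicity is a direct computation'' cannot be repaired into that statement.

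The second gap is that the identification of $\Phi_x(\mathcal{U}_x)$ with the cone over the translated unit ball, which you leave as ``one checks this cone description matches,'' is the main technical content of the paper's proof (its presentation (\ref{006})): one must show every ray $\mathbb{R}_{>0}(y'+V(x))$ with $F(x,y')<1$ contains a point $y+V(x)$ with $F(x,y)=1$ and $\langle y,V(x)\rangle^F_y<-1$, and conversely, which the paper does by showing the convex function $\varphi(t)=F(y'-t(V(x)+y'))$ crosses the value $1$ with positive derivative --- that derivative sign \emph{is} the branch inequality. Without this step neither Condition (C) nor the openness of $\mathcal{A}$ is established. Also fix the signs there: the cone is over $B^F_x+V(x)$, not $B^F_x-V(x)$, and the origin is avoided because $F(-V(x))>1$ means $-V(x)\notin\overline{B^F_x}$; your inference ``$F(-V(x))>1$ means $V(x)\notin B^F_x$'' is invalid for a non-reversible Finsler metric.
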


Lemma \ref{lemma-1} is a consequence of Proposition 2.2 in
\cite{JV2018}. Here we present a self contained proof for it.

\begin{proof}
It is obvious that each $\mathcal{A}_x$ is conic. We will
prove
\begin{equation}\mathcal{A}_x=\{t(y'+V(x))|\forall t>0, y'\in T_xM \mbox{ with }F(x,y')<1\}.\label{006}
\end{equation}

Consider any ray contained in the right side of (\ref{006}), i.e., $\mathbb{R}_{>0}(y'+V(x))$ with $F(x,y')<1$. Denote
$y(t)=y'-t(V(x)+y')$ and $\varphi(t)=F(y(t))$. The function $\varphi(t)$ is continuous and non-negative, and it is smooth when $y(t)\neq0$. Direct calculation shows
\begin{eqnarray}
\frac{\mathrm{d} }{\mathrm{d} t}\varphi(t)=\frac{\langle y(t),-V(x)-y'\rangle^F_{y(t)}}{F(y(t))}\label{005}
\end{eqnarray}
and by Cauchy inequality,
\begin{eqnarray}\label{030}
\frac{\mathrm{d} ^2}{\mathrm{d} t^2}\varphi(t)=\frac{\langle -V(x)-y',-V(x)-y'\rangle_{y(t)}^F
\langle y(t),y(t)\rangle_{y(t)}^{F}
-(\langle -V(x)-y',y(t)\rangle_{y(t)}^F)^2}{F(y(t))^3}\geq0,\label{029}
\end{eqnarray}
so $\varphi(t)$ is convex.
Since $\varphi(0)=F(y')<1$ and $\varphi(1)=F(-V(x))>1$, there exists $y=y(t_0)$ for some $t_0\in(0,1)$ with $\varphi(t_0)=F(y)=1>f(0)$. By the convexity of $f(t)$ and (\ref{005}),
$$\frac{\mathrm{d} }{\mathrm{d} t}\varphi(t_0)=\frac{\langle y,-V(x)-y'\rangle_y^F}{F(y)}=\frac{
\langle y,-V(x)-y\rangle_y^F}{
(1-t_0)F(y)}=\frac{-\langle y,V\rangle_y^F-1}{(1-t_0)F(y)}>0.$$
So $\langle y,V(x)\rangle_y^F<-F(y)=-1$ and $$\mathbb{R}_{>0}\tilde{y}=\mathbb{R}_{>0}(y+V(x))=
\mathbb{R}_{>0}((1-t_0)(y'+V(x)))
=\mathbb{R}_{>0}(y'+V(x))$$ is contained in $\mathcal{A}_x$.

On the other hand, consider any ray $\mathbb{R}_{>0}\tilde{y}$ in
$ \mathcal{A}_x$ where $\tilde{y}=y+V(x)$ with $g_y^F(y,V(x))<-F(x,y)=-1$. Similar calculation as (\ref{005}) shows $\varphi(t)=F(y-t(y+V(x)))$ has positive derivative at $t=0$, so for $y'=y-t'(y+V(x))$ with negative $t'$ sufficiently close to 0, $F(y')<1$
and $\mathbb{R}_{>0}(y+V(x))=\mathbb{R}_{>0}(y'+V(x))$ is contained in the right side of (\ref{006}).

To summarize, above argument proves the presentation (\ref{006}) for $\mathcal{A}_x$. By this presentation, we see $\mathcal{A}_x$ is a cone
generated by the nonempty convex open subset $\{y+V(x),\forall y\in T_xM\mbox{ with }F(y)<1\}$. So $\mathcal{A}_x$ is a nonempty convex conic open subset in $T_xM$. On the other hand, the presentation (\ref{006}) depends continuously on $x\in M$, so
$\mathcal{A}=\coprod_{x\in M}\mathcal{A}_x$ is an open subset in
$TM\backslash0$, i.e., a Lorentz Finsler metric may be defined on it.

Secondly, we proves (\ref{001}) defines a positive smooth function $\tilde{F}$ on $\mathcal{A}$. Denote the open subset $\mathcal{A}'=\coprod_{x\in M}\mathcal{A}'_x$ in $TM\backslash0$ in which $\mathcal{A}'_x$ is the subset of all $y\in T_xM\backslash0$ satisfying $\langle y,V(x)\rangle^F_{y}<-F(x,y)$, and $\phi(x,y)=(x,y+F(x,y)V(x))$ the surjective smooth map from $\mathcal{A}'$ to $\mathcal{A}$. Now we prove $\phi$ is a diffeomorphism. Direct calculation shows its tangent map is isomorphic everywhere. To see it is injective, we consider the possibility of $y_1+F(x,y_1)V(x)=y_2+F(x,y_2)V(x)=v$ for any
$y_1,y_2\in \mathcal{A}'_x$. If $F(x,y_1)=F(x,y_2)$, we
get immediately $y_1=y_2$. Otherwise, we may assume $F(x,y_1)<F(x,y_2)$. By similar calculation as (\ref{005}) and (\ref{030}), $\psi(t)=F(x,v-tV(x))$ is a continuous convex non-negative function
with
\begin{equation*}
\tfrac{\mathrm{d}}{\mathrm{d}t}\psi(F(x,y_1))=
\frac{\langle y_1,-V(x)\rangle^F_{y_1}}{F(x,y_1)}>1\quad\mbox{and}\quad
\tfrac{\mathrm{d}}{\mathrm{d}t}\psi(F(x,y_2))=
\frac{\langle y_2,-V(x)\rangle^F_{y_2}}{F(x,y_2)}>1,
\end{equation*}
and $\psi(t)$ is smooth when $\psi(t)\neq0$.
So the restriction of $\psi(t)$ to the interval $[F(x,y_1),F(x,y_2)]$
is positive and smooth with $\tfrac{\mathrm{d}}{\mathrm{d}t}\psi(t)>1$ everywhere. This is a contradiction to Lagrange's Mean Value Theorem because $\psi(F(x,y_i))=F(x,y_i)$ for $i=1,2$.
To summarize, $\phi$ is a diffeomorphism
from $\mathcal{A}'$ to $\mathcal{A}$, and then (\ref{001}) implies
$\tilde{F}=F\circ\phi^{-1}$
is a positive smooth function on $\mathcal{A}$. The positive 1-homogeneity of $\tilde{F}$ is obvious.

Finally, we prove the fundamental tensor of $\tilde{F}$ has the signature type $(1,n-1)$. At each $\tilde{y}=y+F(x,y)V(x)$ with $\tilde{F}(x,\tilde{y})=F(x,y)=1$ and $\langle y,V(x)\rangle^F_y<-1$, the fundamental tensor $\langle\cdot,\cdot\rangle^{\tilde{F}}_{\tilde{y}}$ is obviously
positive definite in the direction $\mathbb{R}\tilde{y}$.
Apply $\langle y,V(x)\rangle^F_y<-F(x,y)=-1$ to (\ref{004}), we see $\langle\cdot,\cdot\rangle^{\tilde{F}}_{\tilde{y}}$ is negative definite when restricted to $T_{\tilde{y}}S^{\tilde{F}}_x M=T_yS^F_xM$, i.e., the $\langle\cdot,\cdot\rangle^{\tilde{F}}_{\tilde{y}}$-orthogonal complement of $\mathbb{R}\tilde{y}$. So $\langle\cdot,\cdot\rangle^{\tilde{F}}_{\tilde{y}}$ has the signature type $(1,n-1)$.

To summarize, $\tilde{F}:\mathcal{A}\rightarrow\mathbb{R}_{>0}$ is a Lorentz Finsler metric, and it satisfies Condition (C).
\end{proof}

\subsection{Homothetic navigation and geometric correspondences}

A smooth tangent vector field $V$ on the Finsler manifold $(M,F)$
is called {\it homothetic} if it generates a one-parameter local subgroup of local homothetic translations $\Psi_t$ satisfying
\begin{equation}\label{007}
(\Psi_t^*F)(x,y)=F(\Psi_t(x),(\Psi_t)_*(y))=e^{-2ct}F(x,y),
\end{equation}
for each $x\in M$, $y\in T_xM$ and $t\in \mathbb{R}$, whenever
$\Psi_t(x)$ is defined. The constant $c$ in (\ref{007}) is called
the {\it dilation} of $V$. We call the homothetic field $V$ a {\it Killing field}
when its dilation $c=0$.
Using the standard local coordinates $(x^i,y^i)$ on $TM$, the homothetic property for $V$ can be equivalently described by $V^{\mathrm{c}} F=-2cF$, in which $V^{\mathrm{c}}=V^i\partial_{x^i}
+y^j\frac{\partial}{\partial x^j}V^i\partial_{y^i}$ is the complete lifting of $V=V^i(x)\partial_{x^i}$. This observation implies the following useful fact (see Lemma 3.1 in \cite{XMYZ2020}).
\begin{lemma}\label{lemma-10}
The restriction of the homothetic field $V$ with dilation $c$ to a unit speed geodesic $\gamma(t)$ on the Finsler manifold $(M,F)$ satisfies
    \begin{equation}\label{008}
    \langle V(\gamma(t)),\dot{\gamma}(t)\rangle^F_{\dot{\gamma}(t)}
    \equiv c_0-2ct,
    \end{equation}
    in which the constant $c_0$ may be taken as $c_0=\langle V(\gamma(0)),\dot{\gamma}(0)\rangle^F_{\dot{\gamma}(0)}$ when $\gamma(t)$ is defined for $t$ around $0$.
\end{lemma}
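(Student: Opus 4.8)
The plan is to deduce the lemma from a single pointwise identity on $TM\setminus 0$: if $\mathbf{G}=y^i\partial_{x^i}-2G^i\partial_{y^i}$ denotes the geodesic spray and $P(x,y):=\langle V(x),y\rangle^F_y$, then $\mathbf{G}P\equiv-2cF^2$. Granting this, the lemma follows at once by restricting to a unit speed geodesic.

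First I would put $P$ in a form adapted to computation. Euler's theorem applied to the $1$-homogeneous functions $[F^2]_{y^i}$ gives $g_{ij}(x,y)y^j=\tfrac12[F^2]_{y^i}(x,y)$, hence
$$P(x,y)=g_{ij}(x,y)V^i(x)y^j=\tfrac12\,V^i(x)[F^2]_{y^i}(x,y),$$
which is smooth on $TM\setminus 0$. Along the unit speed geodesic $\gamma$, set $h(t):=P(\gamma(t),\dot\gamma(t))$; differentiating and substituting the geodesic ODE $\ddot\gamma^i=-2G^i(\gamma,\dot\gamma)$ yields $h'(t)=(\mathbf{G}P)(\gamma(t),\dot\gamma(t))$.

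It then remains to evaluate $\mathbf{G}P$. Expanding from $P=\tfrac12 V^i[F^2]_{y^i}$ produces three terms, and the only nonformal input is the defining relation for the spray coefficients, equivalently $[F^2]_{y^iy^k}G^k=\tfrac12\big([F^2]_{x^jy^i}y^j-[F^2]_{x^i}\big)$; substituting it makes the two terms containing the mixed derivative $[F^2]_{x^jy^i}$ cancel, and what survives is
$$\mathbf{G}P=\tfrac12\big(V^i[F^2]_{x^i}+y^j(\partial_{x^j}V^i)[F^2]_{y^i}\big)=\tfrac12\,V^{\mathrm{c}}(F^2),$$
where $V^{\mathrm{c}}=V^i\partial_{x^i}+y^j(\partial_{x^j}V^i)\partial_{y^i}$ is the complete lift of $V$. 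Since $V$ is homothetic with dilation $c$ we have $V^{\mathrm{c}}F=-2cF$ (as recalled before the lemma), whence $V^{\mathrm{c}}(F^2)=2F\cdot V^{\mathrm{c}}F=-4cF^2$, and therefore $\mathbf{G}P\equiv-2cF^2$ on $TM\setminus 0$.

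Finally, on the unit speed geodesic one has $F(\gamma(t),\dot\gamma(t))\equiv 1$, so $h'(t)=-2c$ throughout the interval on which $\gamma$ is defined, and integration gives $h(t)=h(0)-2ct$; when $0$ lies in that interval we may take $c_0=h(0)=\langle V(\gamma(0)),\dot\gamma(0)\rangle^F_{\dot\gamma(0)}$. The only step demanding care is the index bookkeeping in the cancellation that computes $\mathbf{G}P$; everything else is formal. (A more invariant variant: along a geodesic with reference vector $T=\dot\gamma$ one has $\tfrac{\mathrm{d}}{\mathrm{d}t}\langle V,T\rangle_T=\langle D^T_T V,T\rangle_T$ because $D^T_T T=0$ and $g_T$ is parallel along $\gamma$, after which $\langle D^T_T V,T\rangle_T$ is pinned to $-2cF^2(T)$ by the homothety condition; I prefer the coordinate computation above because it is self-contained and uses only what the excerpt has already set up.)
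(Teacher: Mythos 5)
Your proof is correct, and I checked the one step you flagged: with $P(x,y)=\langle V(x),y\rangle^F_y=\tfrac12 V^i[F^2]_{y^i}$ (Euler's theorem gives $g_{ij}y^j=\tfrac12[F^2]_{y^i}$), the chain rule plus the geodesic equation does give $h'(t)=(\mathbf{G}P)(\gamma(t),\dot\gamma(t))$, and substituting $[F^2]_{y^iy^k}G^k=\tfrac12([F^2]_{x^jy^i}y^j-[F^2]_{x^i})$ really does cancel the mixed derivatives and leave $\mathbf{G}P=\tfrac12 V^{\mathrm{c}}(F^2)=-2cF^2$, after which integration along the unit speed geodesic gives (\ref{008}). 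Note, however, that the paper does not prove Lemma \ref{lemma-10} in the text at all: it is quoted from Lemma 3.1 of \cite{XMYZ2020}, with only the remark that homothety is equivalent to $V^{\mathrm{c}}F=-2cF$ offered as motivation. So your contribution is a self-contained, purely local verification built exactly on that remark, using nothing beyond the spray coefficients already displayed in Section 2.1; this is arguably more elementary and more transparent than deferring to the reference, whose treatment proceeds through the geometry of the homothetic flow $\Psi_t$ (the mechanism the rest of the paper actually exploits, and which your parenthetical covariant-derivative variant, $\tfrac{\mathrm{d}}{\mathrm{d}t}\langle V,\dot\gamma\rangle_{\dot\gamma}=\langle D^{\dot\gamma}_{\dot\gamma}V,\dot\gamma\rangle_{\dot\gamma}=-2cF^2$, comes closer to). In short: correct, complete for the statement as given, and a legitimately different, more computational route than the citation the paper relies on.
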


The equality (\ref{007}) for
$\Psi_t$ generated by $V$ immediately implies
\begin{equation}\label{016}
\langle(\Psi_t)_*u,(\Psi_t)_*v\rangle_{(\Psi_t)_*y}^F=
e^{-4ct}\langle u,v\rangle_y^F, \quad\forall u,v\in T_xM,
\end{equation}
from which we see the tangent map $(\Psi_t)_*:TM\rightarrow TM$ preserves the distortion function $\tau^F_{\mathrm{d}\mu^F_{\mathrm{BH}}}(x,y)$ (defining the S-curvature $S^F_{\mathrm{d}\mu^F_{\mathrm{BH}}}$ for $F$ and its B.H. measure) on $TM\backslash0$. Meanwhile, $(\Psi_t)_*$ preserves the geodesic spray $G$ because $\Psi_t$ maps any constant speed geodesic to a constant speed geodesic, so $(\Psi_t)_*$ preserves the S-curvature $S^F_{\mathrm{d}\mu^F_{\mathrm{BH}}}$, i.e.,
$S^F(x,y)=S^F(\Psi_t(x),(\Psi_t)_*y)$, $\forall (x,y)\in TM\backslash0$ and $t\in\mathbb{R}$ where $\Psi_t$ is defined. This observation is valid not only for those $\Psi_t$ generated by a homothetic vector field but also for all local homothetic translations, which can be summarized as following.
\begin{lemma}\label{lemma-11}
Any local homothetic translation on a Finsler manifold $(M,F)$ preserves the S-curvature $S^F_{\mathrm{d}\mu^F_{\mathrm{BH}}}$.
\end{lemma}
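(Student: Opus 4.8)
The plan is to repeat, for an arbitrary local homothetic translation $\phi\colon U\to\phi(U)$ (that is, a diffeomorphism between open subsets of $M$ with $\phi^{*}F=\lambda F$ for some constant $\lambda>0$; the case $\phi=\Psi_{t}$ of a homothetic field is $\lambda=e^{-2ct}$), the computation carried out just above for the flow generated by a homothetic field. That computation used only two properties of $\Psi_{t}$: the scaling identity (\ref{016}) for the fundamental tensor (there with factor $e^{-4ct}$), and the fact that $\Psi_{t}$ sends positive constant speed geodesics to positive constant speed geodesics. Both survive for $\phi$ once $e^{-4ct}$ is replaced by $\lambda^{2}$. Concretely I would prove: (i) $\phi_{*}$ preserves the distortion function $\tau^{F}_{\mathrm{d}\mu^{F}_{\mathrm{BH}}}$ on $TM\backslash0$; (ii) $\phi_{*}\colon TM\backslash0\to TM\backslash0$ preserves the geodesic spray $G$; and (iii) conclude, since $S^{F}_{\mathrm{d}\mu^{F}_{\mathrm{BH}}}$ is the $G$-derivative of $\tau^{F}_{\mathrm{d}\mu^{F}_{\mathrm{BH}}}$, that $S^{F}_{\mathrm{d}\mu^{F}_{\mathrm{BH}}}(\phi(x),\phi_{*}y)=S^{F}_{\mathrm{d}\mu^{F}_{\mathrm{BH}}}(x,y)$.

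For (i), squaring $\phi^{*}F=\lambda F$ and differentiating twice in the fibre variable yields $\langle\phi_{*}u,\phi_{*}v\rangle^{F}_{\phi_{*}y}=\lambda^{2}\langle u,v\rangle^{F}_{y}$, the analog of (\ref{016}). In standard local coordinates $(x^{i},y^{i})$ near $p$ and $(\bar{x}^{i},\bar{y}^{i})$ near $\phi(p)$, writing $\bar{x}=\phi(x)$, $\bar{y}^{i}=\tfrac{\partial\bar{x}^{i}}{\partial x^{j}}y^{j}$ and $J=\det\bigl(\tfrac{\partial\bar{x}^{i}}{\partial x^{j}}\bigr)$, this reads $\bar{g}_{kl}(\bar{x},\bar{y})\tfrac{\partial\bar{x}^{k}}{\partial x^{i}}\tfrac{\partial\bar{x}^{l}}{\partial x^{j}}=\lambda^{2}g_{ij}(x,y)$, hence $\sqrt{|\det(\bar{g}_{kl})|}=\lambda^{n}|J|^{-1}\sqrt{|\det(g_{ij})|}$. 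On the other hand $F(\bar{x},\bar{y})\le1$ is equivalent to $F(x,y)\le\lambda^{-1}$, so the coordinate body $\{\bar{y}\mid F(\bar{x},\bar{y})\le1\}$ is the linear image under $\tfrac{\partial\bar{x}}{\partial x}$ of $\lambda^{-1}$ times $\{y\mid F(x,y)\le1\}$, and therefore the B.H.\ density obeys $\sigma(\bar{x})=\lambda^{n}|J|^{-1}\sigma(x)$. Dividing, $\sqrt{|\det\bar{g}|}/\sigma(\bar{x})=\sqrt{|\det g|}/\sigma(x)$, i.e.\ $\tau^{F}_{\mathrm{d}\mu^{F}_{\mathrm{BH}}}\circ\phi_{*}=\tau^{F}_{\mathrm{d}\mu^{F}_{\mathrm{BH}}}$; this is precisely the invariance used implicitly above for $\Psi_{t}$.

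For (ii), inspecting $G^{i}=\tfrac14 g^{il}([F^{2}]_{x^{k}y^{l}}y^{k}-[F^{2}]_{x^{l}})$ shows that $G$ is unchanged when $F$ is multiplied by a positive constant; since $\phi$ is then an isometry from $(M,\lambda F)$ onto $(M,F)$, and $(M,\lambda F)$ and $(M,F)$ carry the same geodesic spray, $\phi_{*}$ preserves $G$ (equivalently $\phi$ maps $F$-geodesics to $F$-geodesics, as noted above for $\Psi_{t}$). Step (iii) is then immediate from naturality of the directional derivative. The whole argument is essentially the one preceding the lemma; the only step needing genuine care is (i), namely checking that the Jacobian factor $|J|$ coming from the transformation of $\det(g_{ij})$ cancels exactly against the Jacobian factor in the transformation of the B.H.\ density $\sigma$, and that the powers of $\lambda$ cancel likewise, so that it is the distortion $\tau^{F}_{\mathrm{d}\mu^{F}_{\mathrm{BH}}}$ — and neither $\mathrm{d}\mu^{F}_{\mathrm{BH}}$ nor $\det(g_{ij})$ alone — that is genuinely $\phi_{*}$-invariant. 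I do not expect any obstacle beyond this bookkeeping.
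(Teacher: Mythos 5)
Your proposal is correct and takes essentially the same route as the paper, which proves this lemma by the paragraph immediately preceding it: invariance of the distortion $\tau^F_{\mathrm{d}\mu^F_{\mathrm{BH}}}$ under the tangent map (via the scaling identity (\ref{016}), i.e.\ your factor $\lambda^2$) combined with preservation of the geodesic spray, and the fact that the S-curvature is the spray derivative of the distortion. The only difference is that you make explicit the Jacobian and $\lambda$-power bookkeeping showing the B.H.\ density and $\sqrt{|\det(g_{ij})|}$ transform identically, which the paper leaves implicit.
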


We say $\tilde{F}$ is induced by a
{\it homothetic (or Killing) navigation}
if the tangent vector field $V$ in the navigation
datum $(F,V)$ is a homothetic (or Killing respectively) field.
The geometric correspondences between $F$ and $\tilde{F}$ have been extensively explored in Finsler and pseudo-Finsler geometry \cite{JV2018}\cite{XMYZ2020}. In this paper,
we will use the correspondences for the geodesics and for S-curvatures before and after a homothetic navigation when $F$ is Finsler and $\tilde{F}$ is Lorentz Finsler.

To unify the discussion for Killing navigation and non-Killing homothetic navigation, we
denote $\alpha_0(t)=t$ and  $\alpha_c(t)=\frac{e^{2ct}-1}{2c}$ for $c\in\mathbb{R}\backslash\{0\}$, i.e.,
$\alpha_c(t)$ satisfies $\frac{\mathrm{d} }{\mathrm{d} t}\alpha_c(t)=e^{2ct}$ and $\alpha_c(0)=0$. The variable $t$ here and later can always be assumed to be sufficiently close to 0, because our discussion is local.

\begin{lemma}\label{lemma-2}
Let $\tilde{F}$ be the Lorentz Finsler metric induced by the navigation process with the datum $(F,V)$, in which $V$ is a homothetic field for $F$ with dilation $c$, satisfying
$F(-V(x))>1$ everywhere. Denote $\Psi_t$ the flow of local diffeomorphisms generated by $V$.
Suppose $\gamma(t)$ is a unit speed geodesic on $(M,F)$ satisfying
$c_0=\langle \dot{\gamma}(0),V(\gamma(0))
\rangle^F_{\dot{\gamma}(0)}<-1$. Then  $\tilde{\gamma}(t)
=\Psi_t(\gamma(\alpha_c(t)))$ (for $t$ sufficiently close to 0) is a unit speed geodesic on $(M,\tilde{F})$. Further more,
for any tangent vectors $v_1,v_2\in T_{\gamma(\alpha_c(t))}M$
with $$\langle v_1,\dot{\gamma}(\alpha_c(t))\rangle^F_{\dot{\gamma}(\alpha_c(t))}
=\langle v_2,\dot{\gamma}(\alpha_c(t))\rangle^F_{\dot{\gamma}(\alpha_c(t))}
=0,$$
$(\Psi_t)_*(v_1)$ and $(\Psi_t)_*(v_2)$ in $ T_{\tilde{\gamma}(t)}M$
satisfy
$$\langle (\Psi_t)_*(v_1),\dot{\tilde{\gamma}}(t)\rangle^{\tilde{F}}_{
\dot{\tilde{\gamma}}(t)}
=\langle (\Psi_t)_*(v_2),\dot{\tilde{\gamma}}(t)\rangle^{\tilde{F}}_{
\dot{\tilde{\gamma}}(t)}
=0,$$
and
\begin{equation}\label{009}
\langle(\Psi_t)_*(v_1),
(\Psi_t)_*(v_2),\rangle^{\tilde{F}}_{\dot{\tilde{\gamma}}(t)}
=\frac{1}{c_0+1}e^{-2ct}\langle v_1,v_2\rangle^F_{\dot{\gamma}(\alpha_c(t))}.
\end{equation}
\end{lemma}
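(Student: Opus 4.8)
The plan is to recognize $\dot{\tilde\gamma}(t)$ as a vector of the form appearing on the right-hand side of the navigation equation (\ref{001}), deduce from Lemma \ref{lemma-1} that it is $\tilde{F}$-unit and that it lies in $\mathcal{A}$, then invoke the known geodesic correspondence for homothetic navigation to upgrade ``$\tilde{F}$-unit'' to ``$\tilde{F}$-geodesic'', and finally read off the statements about $(\Psi_t)_*v_i$ from the fundamental-tensor identities (\ref{003})--(\ref{004}).

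First I would set $\sigma(t)=\gamma(\alpha_c(t))$, so that $\dot\sigma(t)=e^{2ct}\dot\gamma(\alpha_c(t))$ and $F(\dot\sigma(t))=e^{2ct}$ because $\gamma$ is unit speed. Since $\Psi_t$ is the flow of $V$, differentiating $\tilde\gamma(t)=\Psi_t(\sigma(t))$ gives $\dot{\tilde\gamma}(t)=y(t)+V(\tilde\gamma(t))$ with $y(t):=(\Psi_t)_*(\dot\sigma(t))$. Writing $x=\tilde\gamma(t)$, the homothety relation (\ref{007}) yields $F(x,y(t))=e^{-2ct}F(\dot\sigma(t))=1$, so $\dot{\tilde\gamma}(t)=y(t)+F(x,y(t))V(x)$ has exactly the shape appearing in (\ref{001}). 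To apply Lemma \ref{lemma-1} I must check the cone condition $\langle y(t),V(x)\rangle^F_{y(t)}<-1$. Using $(\Psi_t)_*V(\sigma(t))=V(x)$, the scaling identity (\ref{016}), the $0$-homogeneity of the fundamental tensor in its reference vector, and Lemma \ref{lemma-10} along the unit speed geodesic $\gamma$, one computes $\langle y(t),V(x)\rangle^F_{y(t)}=e^{-2ct}(c_0-2c\alpha_c(t))=e^{-2ct}(c_0+1)-1$, which is $<-1$ precisely because $c_0<-1$; this is where the hypothesis enters. Hence $\dot{\tilde\gamma}(t)\in\mathcal{A}_{\tilde\gamma(t)}$ for $t$ near $0$, and Lemma \ref{lemma-1} then gives $\tilde{F}(\dot{\tilde\gamma}(t))=F(y(t))=1$.

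It remains to see that $\tilde\gamma$ is a geodesic of $\tilde{F}$, not merely $\tilde{F}$-unit. Here I would quote the geodesic correspondence for homothetic navigation in pseudo-Finsler geometry of Javaloyes and Vit\'{o}rio \cite{JV2018}, whose purely Finsler version underlies \cite{XMYZ2020}: once the cone membership above is established, that correspondence asserts exactly that $\Psi_t(\gamma(\alpha_c(t)))$ is a constant speed geodesic of $\tilde{F}$ whenever $\gamma$ is a geodesic of $F$. This is the step I expect to be the main obstacle for a fully self-contained treatment, since proving it from scratch requires either the transformation rule of the geodesic spray under a homothetic navigation or a fixed-endpoint variational argument complicated by the time reparametrization $\alpha_c$; for the present purpose I would simply cite \cite{JV2018}.

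For the last part, suppose $\langle v_i,\dot\gamma(\alpha_c(t))\rangle^F_{\dot\gamma(\alpha_c(t))}=0$ for $i=1,2$. By $0$-homogeneity this is the same as $\langle v_i,\dot\sigma(t)\rangle^F_{\dot\sigma(t)}=0$, and then (\ref{016}) shows that $(\Psi_t)_*v_i$ is $F$-orthogonal to $y(t)=(\Psi_t)_*\dot\sigma(t)$; equivalently, $(\Psi_t)_*v_i$ lies in the common tangent space of the two indicatrices, $T_{y(t)}S^F_xM=T_{\dot{\tilde\gamma}(t)}S^{\tilde{F}}_xM$, and since this tangent space is the $\tilde{F}$-orthogonal complement of $\dot{\tilde\gamma}(t)$ we obtain $\langle (\Psi_t)_*v_i,\dot{\tilde\gamma}(t)\rangle^{\tilde{F}}_{\dot{\tilde\gamma}(t)}=0$. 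On this subspace the identity (\ref{004}), valid for Lorentz $\tilde{F}$ as remarked after it, gives $\langle (\Psi_t)_*v_1,(\Psi_t)_*v_2\rangle^{\tilde{F}}_{\dot{\tilde\gamma}(t)}=(1+\langle y(t),V(x)\rangle^F_{y(t)})^{-1}\langle (\Psi_t)_*v_1,(\Psi_t)_*v_2\rangle^F_{y(t)}$; substituting $1+\langle y(t),V(x)\rangle^F_{y(t)}=e^{-2ct}(c_0+1)$ from the second paragraph and $\langle (\Psi_t)_*v_1,(\Psi_t)_*v_2\rangle^F_{y(t)}=e^{-4ct}\langle v_1,v_2\rangle^F_{\dot\gamma(\alpha_c(t))}$ from (\ref{016}) together with $0$-homogeneity yields precisely (\ref{009}).
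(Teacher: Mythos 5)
Your proposal is correct and follows essentially the same route as the paper: the cone-membership and unit-speed computation via Lemma \ref{lemma-10} and (\ref{016}) reproduces the paper's equation (\ref{017}), the geodesic statement is obtained by citing the homothetic navigation correspondence of Javaloyes--Vit\'orio exactly as the paper does, and the orthogonality plus (\ref{009}) are derived from (\ref{004}) together with that computed value, which is the argument the paper defers to Lemma 5.2 of \cite{XMYZ2020}. Your write-up merely spells out that last step (the identification $T_{y(t)}S^F_xM=T_{\dot{\tilde\gamma}(t)}S^{\tilde F}_xM$ as the $\tilde F$-orthogonal complement of $\dot{\tilde\gamma}(t)$) rather than citing it, which is a presentational, not a mathematical, difference.
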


\begin{proof}
The first statement in
Lemma \ref{lemma-2} is a special case of Theorem 1.2 in \cite{JV2018}. For the convenience in later discussion, we denote
$y(t)=(\Psi_t)_*(\frac{\mathrm{d} }{\mathrm{d} t}\gamma(\alpha_c(t)))$ and present
\begin{equation}\label{015}
\dot{\tilde{\gamma}}(t)=
y(t)+V(\tilde{\gamma}(t)).
\end{equation}
By the homothetic property of $\Psi_t$,
$$y(t)=(\Psi_t)_*(\frac{\mathrm{d} }{\mathrm{d} t}\gamma(\alpha_c(t)))=
e^{2ct}(\Psi_t)_*(\dot{\gamma}(\alpha_c(t)))$$ is a $F$-unit vector.
Using (\ref{008}) and (\ref{016}), we get
\begin{eqnarray}\label{017}
\langle y(t),
V(\tilde{\gamma}(t))\rangle^F_{y(t)}
&=&\langle (\Psi_t)_*(\frac{\mathrm{d} }{\mathrm{d} t}\gamma(\alpha_c(t))),
(\Psi_t)_*(V({\gamma}(\alpha_c(t)))\rangle^F_{
(\Psi_t)_*(\frac{\mathrm{d} }{\mathrm{d} t}\gamma(\alpha_c(t)))}\nonumber\\
&=&e^{-4ct}\langle\frac{\mathrm{d} }{\mathrm{d} t}\gamma(\alpha_c(t)),
V(\gamma(\alpha_c(t)))
\rangle^F_{\dot{\gamma}(\alpha_c(t))}\nonumber\\
&=&e^{-2ct}\langle
\dot{\gamma}(\alpha_c(t)),V(\gamma(\alpha_c(t)))
\rangle^F_{\dot{\gamma}(
\alpha_c(t))}\nonumber\\
&=&e^{-2ct}(c_0-2c\alpha_c(t))\nonumber\\
&=&e^{-2ct}(c_0+1)-1,
\end{eqnarray}
in which $c_0=\langle\dot{\gamma}(0),
V(\gamma(0))\rangle<-1$.
So $$\langle y(t),
V(\tilde{\gamma}(t))\rangle^F_{
y(t)}<
-F(\tilde{\gamma}(t),y(t))=-1,
$$
and
$\dot{\tilde{\gamma}}(t)=y(t)+V(\tilde{\gamma}(t))$ is a $\tilde{F}$-unit vector in $\mathcal{A}_{\tilde{\gamma}(t)}$.

The other statements in Lemma \ref{lemma-2} then follows (\ref{004}) and (\ref{017}) by almost the same argument as for Lemma 5.2 in \cite{XMYZ2020}.
\end{proof}
\begin{lemma} \label{lemma-8}
Let $\tilde{F}$ be the Lorentz Finsler metric on $M^n$ induced by the homothetic navigation process with the datum $(F,V)$, in which the homothetic tangent vector field $V$ has the dilation constant $c$ and satisfies
$F(-V(x))>1$ everywhere. Then with respect to the B.H. measure
$\mathrm{d}\mu^F_{\mathrm{BH}}$ of $F$, the S-curvatures $S^F_{\mathrm{d}\mu^F_{\mathrm{BH}}}$ for $F$ and $S^{\tilde{F}}_{\mathrm{d}\mu^F_{\mathrm{BH}}}$ for $\tilde{F}$, are related by the following equality,
\begin{equation}\label{010}
S^{\tilde{F}}_{\mathrm{d}\mu^F_{\mathrm{BH}}}(x,\tilde{y})=
S^F_{\mathrm{d}\mu^F_{\mathrm{BH}}}(x,y)+(n+1)cF(x,y),
\end{equation}
for any $y\in T_xM\backslash\{0\}$ satisfying $\langle y,V(x)\rangle_y^F<-F(y)$ and $\tilde{y}=y+F(x,y)V(x)$.
\end{lemma}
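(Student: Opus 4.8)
The plan is to mimic the proof of Theorem 1.3 in \cite{XMYZ2020}, comparing the distortion functions of $F$ and $\tilde{F}$ with respect to the common measure $\mathrm{d}\mu^F_{\mathrm{BH}}$ and then differentiating along the geodesic spray. Fix $x\in M$, $y\in T_xM\backslash\{0\}$ with $\langle y,V(x)\rangle^F_y<-F(y)$, rescale so that $F(x,y)=1$, and put $\tilde{y}=y+V(x)$. First I would compare the square roots of the determinants of the fundamental tensors. Choose a basis $\{y,e_2,\dots,e_n\}$ of $T_xM$ with $e_2,\dots,e_n\in T_yS^F_xM$, which by Lemma \ref{lemma-1} (via formula (\ref{004}), since the orthogonal complement of $\mathbb{R}\tilde{y}$ in both metrics is $T_yS^F_xM$) is also adapted to $\langle\cdot,\cdot\rangle^{\tilde{F}}_{\tilde{y}}$. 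In the $y$ direction both fundamental forms evaluate to $1$ on $y$ (resp. on $\tilde y$, by $1$-homogeneity and $\tilde F(x,\tilde y)=1$), and the cross terms vanish; on the $(n-1)$-dimensional complement (\ref{004}) gives $\langle\cdot,\cdot\rangle^{\tilde{F}}_{\tilde{y}}=\frac{1}{1+\langle y,V(x)\rangle^F_y}\langle\cdot,\cdot\rangle^F_y$. Writing $\lambda=1+\langle y,V(x)\rangle^F_y<0$, this yields
\begin{equation*}
\frac{\sqrt{|\det(g^{\tilde{F}}_{ij}(x,\tilde{y}))|}}{\sqrt{|\det(g^{F}_{ij}(x,y))|}}=|\lambda|^{-(n-1)}=|1+\langle y,V(x)\rangle^F_y|^{-(n-1)},
\end{equation*}
where I should take care with the basis-change Jacobian $|\det(\partial \tilde y/\partial y)|$ coming from $\phi$ — but since $\phi$ fixes the radial direction and is the identity on the indicatrix tangent space (to first order at a point of $S^F_xM$), in the chosen frames this factor is $1$. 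Hence the distortions satisfy $\tau^{\tilde{F}}_{\mathrm{d}\mu^F_{\mathrm{BH}}}(x,\tilde{y}) = \tau^{F}_{\mathrm{d}\mu^F_{\mathrm{BH}}}(x,y) - (n-1)\ln|1+\langle y,V(x)\rangle^F_y|$ for $F$-unit $y$, and by $1$-homogeneity of the distortion in $y$ the general formula is $\tau^{\tilde{F}}_{\mathrm{d}\mu^F_{\mathrm{BH}}}(x,\tilde{y}) = \tau^{F}_{\mathrm{d}\mu^F_{\mathrm{BH}}}(x,y) - (n-1)\ln\bigl|\tfrac{\langle y,V(x)\rangle^F_y}{F(x,y)}+1\bigr|$.

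Next I would differentiate this identity along the motion. The S-curvatures are the derivatives of the distortions along the respective geodesic sprays; so fix an $F$-unit speed geodesic $\gamma(t)$ through $x$ with $\dot\gamma(0)=y$ and $c_0=\langle y,V(x)\rangle^F_y<-1$, and let $\tilde\gamma(t)=\Psi_t(\gamma(\alpha_c(t)))$ be the corresponding $\tilde F$-unit speed geodesic from Lemma \ref{lemma-2}. Since $\mathrm{d}\mu^F_{\mathrm{BH}}$ is $\Psi_t$-invariant in the sense of Lemma \ref{lemma-11} (each $\Psi_t$ is a homothetic translation, hence preserves $S^F_{\mathrm{d}\mu^F_{\mathrm{BH}}}$ and the distortion up to the explicit conformal factor (\ref{016})), I can evaluate $\tau^{\tilde{F}}_{\mathrm{d}\mu^F_{\mathrm{BH}}}(\tilde\gamma(t),\dot{\tilde\gamma}(t))$ using (\ref{015}): $\dot{\tilde\gamma}(t)=y(t)+V(\tilde\gamma(t))$ with $y(t)$ the $\Psi_t$-pushforward of the $F$-unit vector $\dot\gamma(\alpha_c(t))$ scaled by $e^{2ct}$. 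The key input is (\ref{017}): $\langle y(t),V(\tilde\gamma(t))\rangle^F_{y(t)} = e^{-2ct}(c_0+1)-1$, so $1+\langle y(t),V(\tilde\gamma(t))\rangle^F_{y(t)} = e^{-2ct}(c_0+1)$, and therefore $\ln|1+\langle y(t),V(\tilde\gamma(t))\rangle^F_{y(t)}| = -2ct + \ln|c_0+1|$, whose $t$-derivative is simply $-2c$. Combining: $\frac{\mathrm{d}}{\mathrm{d}t}\tau^{\tilde{F}}_{\mathrm{d}\mu^F_{\mathrm{BH}}}(\tilde\gamma(t),\dot{\tilde\gamma}(t))$ equals the $t$-derivative of $\tau^F_{\mathrm{d}\mu^F_{\mathrm{BH}}}$ evaluated along the $\Psi_t$-transported lift (which by Lemma \ref{lemma-11} and the chain rule, using $\frac{\mathrm{d}}{\mathrm{d}t}\alpha_c(t)=e^{2ct}$, gives $e^{2ct}S^F_{\mathrm{d}\mu^F_{\mathrm{BH}}}(\gamma(\alpha_c(t)),\dot\gamma(\alpha_c(t)))$, then rescaled back by homogeneity to $S^F_{\mathrm{d}\mu^F_{\mathrm{BH}}}(x,y)$ at $t=0$) minus $(n-1)(-2c) = 2(n-1)c$. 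Evaluating at $t=0$ and using $F(x,y)=1$ gives $S^{\tilde{F}}_{\mathrm{d}\mu^F_{\mathrm{BH}}}(x,\tilde{y}) = S^F_{\mathrm{d}\mu^F_{\mathrm{BH}}}(x,y) + 2(n-1)c$. A final $1$-homogeneity check in $y$ (S-curvature is $1$-homogeneous, and $F(x,y)$ is too) promotes this to the stated formula — though the coefficient $2(n-1)$ versus the claimed $(n+1)$ signals I have mishandled a normalization, most likely the contribution of the radial derivative $\frac{d}{dt}$ acting on $\ln\sqrt{|\det g|}$ when the speed of $y(t)$ relative to $\dot\gamma$ changes, or the $e^{-2ct}$ conformal rescaling in (\ref{016}) which contributes an extra $2t$-term to $\tau^F$ itself along the $\Psi_t$ flow; reconciling this bookkeeping to land exactly on $(n+1)cF(x,y)$ is where the real care is needed.

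The main obstacle, then, is not the structure of the argument but the precise accounting of all the conformal and rescaling factors: there are three separate $t$-dependent rescalings in play — the reparametrization $\alpha_c(t)$ with $\dot\alpha_c=e^{2ct}$, the homothety factor $e^{-2ct}$ (resp. $e^{-4ct}$) in (\ref{007})–(\ref{016}), and the factor $\frac{1}{c_0+1}e^{-2ct}$ in (\ref{009}) — and each feeds into $\det g^{\tilde F}$ in the $(n-1)$ transverse directions plus the single radial direction. I would organize the final computation by writing $\tau^{\tilde F}_{\mathrm{d}\mu^F_{\mathrm{BH}}}(\tilde\gamma(t),\dot{\tilde\gamma}(t))$ explicitly as $\tau^F_{\mathrm{d}\mu^F_{\mathrm{BH}}}(\gamma(\alpha_c(t)),\dot\gamma(\alpha_c(t))) + (\text{log of an explicit elementary function of }t)$, using $\Psi_t$-invariance (Lemma \ref{lemma-11}) to kill the dependence of $\tau^F$ on the $\Psi_t$-translation and (\ref{016}) to track its conformal rescaling, then differentiate once. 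This also parallels exactly the proof of Theorem 1.3 in \cite{XMYZ2020}, so I would cross-check the Finsler-case coefficient there — where the analogous formula should read $S^{\tilde F}_{\mathrm{d}\mu^{\tilde F}_{\mathrm{BH}}}(x,\tilde y)=S^F_{\mathrm{d}\mu^F_{\mathrm{BH}}}(x,y)+(n+1)cF(x,y)$ with the genuinely canonical $(n+1)$ arising because the B.H. normalization of $\mathrm{d}\mu$ itself transforms — and adapt that bookkeeping verbatim, the only new feature being the sign $c_0+1<0$ which affects absolute values but not derivatives of logarithms.
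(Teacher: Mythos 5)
Your overall strategy is the right one and is essentially the paper's: compare the distortions of $F$ and $\tilde F$ with respect to the common measure $\mathrm{d}\mu^F_{\mathrm{BH}}$ along the geodesic correspondence of Lemma \ref{lemma-2}, use (\ref{017}) to make the comparison an explicit elementary function of $t$, and differentiate at $t=0$. But the proof as written has a genuine computational gap, and it sits exactly where you flag it: the pointwise determinant comparison. In the fixed basis $\{y,e_2,\dots,e_n\}$ with $e_2,\dots,e_n$ a $\langle\cdot,\cdot\rangle^F_y$-orthonormal basis of $T_yS^F_xM$ and $F(x,y)=1$, write $\lambda=1+\langle y,V(x)\rangle^F_y<0$ and decompose $V(x)=\langle y,V(x)\rangle^F_y\,y+w$ with $w\in T_yS^F_xM$, so $\tilde y=\lambda y+w$. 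In the adapted basis $\{\tilde y,e_2,\dots,e_n\}$ the Gram matrix of $\langle\cdot,\cdot\rangle^{\tilde F}_{\tilde y}$ is $\mathrm{diag}(1,\lambda^{-1},\dots,\lambda^{-1})$ by (\ref{004}), but passing back to the common basis $\{y,e_2,\dots,e_n\}$ costs a change-of-basis determinant $\lambda$, i.e.\ a factor $\lambda^{-2}$ in the Gram determinant. Hence $|\det\bigl(\langle e_i,e_j\rangle^{\tilde F}_{\tilde y}\bigr)|=|\lambda|^{-(n+1)}$ in the common basis, and the ratio of the \emph{square roots} entering the distortions is $|\lambda|^{-(n+1)/2}$, not $|\lambda|^{-(n-1)}$: you both dismissed the basis-change Jacobian (it is $|\lambda|$, not $1$, because $\phi$ moves the radial direction by $y\mapsto y+V(x)$) and dropped the square root. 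The correct static identity is $\tau^{\tilde F}_{\mathrm{d}\mu^F_{\mathrm{BH}}}(x,\tilde y)=\tau^{F}_{\mathrm{d}\mu^F_{\mathrm{BH}}}(x,y)-\tfrac{n+1}{2}\ln|1+\langle y,V(x)\rangle^F_y|$ for $F$-unit $y$; since along the flow $|1+\langle y(t),V(\tilde\gamma(t))\rangle^F_{y(t)}|=e^{-2ct}|1+c_0|$ by (\ref{017}), the correction term contributes exactly $+(n+1)c$ after differentiation, which is the content of the paper's (\ref{013}) together with the B.H.-volume comparison (\ref{018}).

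Because the coefficient $(n+1)c$ is the entire content of the lemma, and your argument lands on $2(n-1)c$ with an explicit admission that the normalization is unresolved, the proof is incomplete as it stands; fixing the two bookkeeping errors above closes it. One further small caution: your parenthetical that the homothety preserves the distortion ``up to the explicit conformal factor (\ref{016})'' is imprecise — as noted after (\ref{016}), $(\Psi_t)_*$ preserves $\tau^F_{\mathrm{d}\mu^F_{\mathrm{BH}}}$ exactly (the conformal rescalings of $\det g$ and of the B.H.\ volume cancel), and this exact invariance, together with the $0$-homogeneity of the distortion in $y$, is what lets you replace $\tau^F$ at $(\tilde\gamma(t),y(t))$ by $\tau^F$ at $(\gamma(\alpha_c(t)),\dot\gamma(\alpha_c(t)))$ before differentiating.
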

\begin{proof} This proof is very similar to that of Theorem 1.3 in \cite{XMYZ2020}. We only need to prove (\ref{010})
when $F(x,y)=\tilde{F}(x,\tilde{y})=1$.
Let $\gamma(t)$ be the unit speed geodesic on $(M,F)$
satisfying $\gamma(0)=x$ and $\dot{\gamma}(0)=y$,
and denote $\tilde{\gamma}(t)=\Psi_t(\gamma(\alpha_c(t)))$ in which $\Psi_t$ is the flow of local diffeomorphisms generated by $V$.

We choose the smooth vector fields
$e_i(t)$, $1\leq i\leq n$, along $\gamma(t)$, such that
$e_1(t)=\dot{\gamma}(t)$ and they are orthonormal  w.r.t. $\langle\cdot,\cdot\rangle^F_{\dot{\gamma}(t)}$ at each $\gamma(t)$. Then we have the induced smooth vector fields along $\tilde{\gamma}(t)$:
\begin{eqnarray*}
\bar{e}_1(t)&=&(\Psi_t)_*(\frac{\mathrm{d} }{\mathrm{d} t}\gamma(\alpha_c(t)))
=e^{2ct}(\Psi_t)_*(e_1(\alpha_c(t))),\\
\bar{e}_i(t)&=&(\Psi_t)_*(e_i(\alpha_c(t))),
\quad\forall 1<i\leq n;\\
\tilde{e}_1(t)&=&\dot{\tilde{\gamma}}(t)=
\bar{e}_1(t)+V(\tilde{\gamma}(t)),\\
\tilde{e}_i(t)&=&\bar{e}_i(t),\quad\forall 1<i\leq n.
\end{eqnarray*}
Then at each $\tilde{\gamma}(t)$, $\{\bar{e}_i(t),\forall 1\leq i\leq n\}$ and $\{\tilde{e}_i(t),\forall 1\leq i\leq n\}$ are two bases for
$T_{\tilde{\gamma}(t)}M$. Denote
\begin{eqnarray*}
\mathrm{vol}(t)&=&\mathrm{Vol}(\{(y^i)|
F(\gamma(t),y^ie_i(t))\leq1\}),\\
\overline{\mathrm{vol}}(t)&=&\mathrm{Vol}(\{(y^i)|
F(\tilde{\gamma}(t),y^i\bar{e}_i(t))\leq1\}),\\
\widetilde{\mathrm{vol}}(t)&=&\mathrm{Vol}(\{(y^i)|
\tilde{F}(\tilde{\gamma}(t),y^i\tilde{e}_i(t))\leq1\}),
\end{eqnarray*}
in which $\mathrm{Vol}(\cdot)$ is the standard measure in the Euclidean space $\mathbb{R}^n$.

The distortion function $\tau^F_{\mathrm{d}\mu^F_{\mathrm{BH}}}$
can be calculated  at each $(\gamma(t),\dot{\gamma}(t))$ using $\{e_i(t),\forall 1\leq i\leq n\}$, i.e.,
\begin{equation}\label{011}
\tau^F_{\mathrm{d}\mu^F_{\mathrm{BH}}}(\gamma(t),\dot{\gamma}(t))
=\ln\sqrt{\det(\langle e_i(t),e_j(t)\rangle^F_{\dot{\gamma}(t)})}
+\ln\mathrm{vol}(t)+C_0,
\end{equation}
in which $C_0$ is some universal constant depending on $n$.
Similarly the distortion function $\tau^{\tilde{F}}_{\mathrm{d}\mu^F_{\mathrm{BH}}}$
can be presented as
\begin{equation}\label{012}
\tau^{\tilde{F}}_{\mathrm{d}\mu^F_{\mathrm{BH}}}
(\gamma(t),\dot{\gamma}(t))
=\ln\sqrt{|\det(\langle \tilde{e}_i(t),\tilde{e}_j(t)\rangle^{\tilde{F}}_{
\dot{\tilde{\gamma}}(t)})|}+\ln\widetilde{\mathrm{vol}}(t)+C_0.
\end{equation}

Using Lemma \ref{lemma-2}, the relation between the two determinants in (\ref{011}) and (\ref{012}) can be given as
\begin{equation}\label{013}
|\det(\langle\tilde{e}_i(t),\tilde{e}_j(t)
\rangle^{\tilde{F}}_{\dot{\tilde{\gamma}}(t)})|
=\frac{1}{|1+c_0|^{n-1}}e^{-2c(n-1)t}\det(\langle e_i(\alpha_c(t)),e_j(\alpha_c(t))
\rangle_{\dot{\gamma}(t)})^F,
\end{equation}
where $c_0=\langle\dot{\gamma}(0),V(\gamma(0))
\rangle_{\dot{\gamma}(0)}^F<-1$.

By multi-variable integral, (\ref{017}) and the homothetic property of $V$,
\begin{eqnarray}\label{018}
\widetilde{\mathrm{vol}}(t)&=&
\frac{\overline{\mathrm{vol}}(t)}{|1+
\langle V(\tilde{\gamma}(t)),\bar{e}_1(t)
\rangle^F_{\bar{e}_1(t)}|}=
\frac{1}{|1+c_0|}e^{2ct}
\overline{\mathrm{vol}}(t)\nonumber\\
&=&\frac{1}{|1+c_0|}e^{2ct} e^{2c(n-1)t}\mathrm{vol}(\alpha_c(t))=
\frac{1}{|1+c_0|}e^{2cnt}\mathrm{vol}(\alpha_c(t)).
\end{eqnarray}

Summarizing (\ref{011})-(\ref{018}), we get
$$\tau^{\tilde{F}}_{\mathrm{d}\mu^F_{\mathrm{BH}}}(\tilde{\gamma}(t),\dot{\tilde{\gamma}}(t))
=\tau^F_{\mathrm{d}\mu^F_{\mathrm{BH}}}(\gamma(\alpha(t)),\dot{\gamma}(\alpha_c(t)))
+c(n+1)t-\frac{n+1}2\ln |1+c_0|,$$
so
\begin{eqnarray*}
S^{\tilde{F}}_{\mathrm{d}\mu^F_{\mathrm{BH}}}(x,\tilde{y})&=&\frac{\mathrm{d} }{\mathrm{d} t}
\tau^{\tilde{F}}_{\mathrm{d}\mu^F_{\mathrm{BH}}}(\tilde{\gamma}(t),\dot{\tilde{\gamma}}(t))
|_{t=0}\\&=&
\frac{\mathrm{d} }{\mathrm{d} t}\alpha_c(0)\frac{\mathrm{d} }{\mathrm{d} s}
\tau^F_{\mathrm{d}\mu^F_{\mathrm{BH}}}(\gamma(s),\dot{\gamma}(s))|_{s=0}+(n+1)c\\
&=&S^F_{\mathrm{d}\mu^F_{\mathrm{BH}}}(x,y)+(n+1)c,
\end{eqnarray*}
which ends the proof.
\end{proof}
\section{Isoparametric hypersurface}

\subsection{Isoparametric hypersurface in Finsler geometry}
Suppose $f$ is a regular real function on the Finsler manifold $(M,F)$ (so the smooth one-form $\mathrm{d}f$ is non-vanishing everywhere).
Then its gradient field $\nabla^F f$ is a smooth tangent vector field on $M$, non-vanishing everywhere, determined by
$\langle\nabla^F f,u\rangle^F_{\nabla f}=(\mathrm{d}f)(u)=u\cdot f$ for each tangent vector $u$, or equivalently, $\nabla f=\mathcal{L}^{-1}(\mathrm{d}f)$, where
$\mathcal{L}:TM\backslash0\rightarrow T^*M\backslash0$ is the Legendre transformation.
The Laplacian of $f$
with respect to  the smooth measure $\mathrm{d}\mu$ is
\begin{equation}\label{022}
\Delta^F_{\mathrm{d} \mu}f=\mathrm{div}_{\mathrm{d} \mu}
\nabla^F f,
\end{equation}
The divergence in (\ref{022}) is defined by
$\mathrm{div}_{\mathrm{d} \mu}X=L_{X}\mathrm{d}\mu$ for any smooth tangent vector field $X$, in which $L$ is the Lie derivative. In particular, when $\mathrm{d}\mu$ coincides with the measure for the
osculation metric $g^F_{\nabla f}$, the corresponding $\Delta^F_{\mathrm{d} \mu}f$ is simply denoted as $\Delta^F f$.
$\Delta^F f$ is in fact the Laplacian of $f$ with respect to the osculation metric $g^F_{\nabla^F f}$.

We call the regular real function $f$ on the Finsler manifold $(M,F)$ {\it transnormal}, if $F(\nabla^F f)$ only depends on the values of $f$. Further more, we call the transnormal function $f$ {\it $\mathrm{d}\mu$-isoparametric}
if $\Delta^F_{\mathrm{d} \mu} f$ only depends on the values of $f$.
The level sets of a $d\mu$-isoparametric function $f$ is called a {\it $\mathrm{d}\mu$-isoparametric hypersurface}. Using $\Delta^F f$ instead of $\Delta^F_{d\mu}f$ in above notions, we get {\it isoparametric function} and {\it isoparametric hypersurface} instead.
In this paper, our discussion for a $d\mu$-isoparametric or isoparametric function is local where no critical value or {\it focal set} is involved.

Replacing $f$ with $\varphi\circ f$ for some suitable one-variable smooth real function $\varphi$, we may {\it normalize} a transnormal,
$\mathrm{d}\mu$-isoparametric, or isoparametric function at some $x_0\in M$ such that
$F(\nabla f)\equiv1$ and $ f(x_0)=0$ (see Lemma 4.1 in \cite{HYS2016}). Notice that the level sets before and after this normalization are essentially the same.

The relation between $\Delta^F_{\mathrm{d} \mu} f$ and $\Delta^F f$ is revealed by Lemma 14.1.2 in \cite{Sh2001},
which is reformulated as following.

\begin{lemma} \label{lemma-5}
Suppose $f$ is a regular real function on a Finsler manifold $(M,F)$,
such that the integral curves of $\nabla^F f$ are $F$-unit speed geodesics, then we have
\begin{equation}\label{023}
\Delta^F f(x)=\Delta^F_{\mathrm{d} \mu} f(x)+S^F_{\mathrm{d}\mu}(x,\nabla^F f(x)),
\end{equation}
in which the S-curvature $S^F_{\mathrm{d}\mu}$ is for $F$ and the smooth measure $\mathrm{d}\mu$.
\end{lemma}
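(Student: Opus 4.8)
The plan is to work in a neighborhood of a fixed point $x$ and reduce the statement to a local identity expressing the Finsler Laplacian in terms of the osculation-metric Laplacian. First I would fix the osculation (Riemannian) metric $\hat{g} = g^F_{\nabla^F f}$ on a neighborhood of $x$, obtained by freezing the reference vector to the gradient field $\nabla^F f$. By definition $\Delta^F f = \mathrm{div}_{\hat{g}}\,\nabla^F f$ computed with respect to the volume form $\mathrm{d}\mu_{\hat g}$ of $\hat g$, while $\Delta^F_{\mathrm{d}\mu} f = \mathrm{div}_{\mathrm{d}\mu}\,\nabla^F f$. Since both divergences are applied to the \emph{same} vector field $X = \nabla^F f$, the difference $\Delta^F f - \Delta^F_{\mathrm{d}\mu} f$ is purely the difference of two divergence operators acting on $X$, which is governed by the logarithmic derivative of the density relating $\mathrm{d}\mu_{\hat g}$ and $\mathrm{d}\mu$ along $X$. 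Concretely, writing $\mathrm{d}\mu_{\hat g} = \rho(x)\,\mathrm{d}\mu$ for a positive function $\rho$, one has $\mathrm{div}_{\mathrm{d}\mu} X = \mathrm{div}_{\mathrm{d}\mu_{\hat g}} X - X(\ln\rho)$, so the whole problem is to identify $X(\ln\rho)$ with the S-curvature term $S^F_{\mathrm{d}\mu}(x,\nabla^F f(x))$.

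The second step is to recognize that $\ln\rho$ is, up to the dimensional constant coming from the volume of the Euclidean unit ball, exactly the distortion function $\tau^F_{\mathrm{d}\mu}$ evaluated along the gradient section. Indeed, $\mathrm{d}\mu_{\hat g}$ at $x$ is $\sqrt{\det(g_{ij}(x,\nabla^F f(x)))}\,\mathrm{d}x^1\cdots\mathrm{d}x^n$, and comparing with $\mathrm{d}\mu = \sigma(x)\,\mathrm{d}x^1\cdots\mathrm{d}x^n$ gives $\ln\rho(x) = \ln\big(\sqrt{\det g_{ij}(x,\nabla^F f(x))}/\sigma(x)\big) = \tau^F_{\mathrm{d}\mu}(x,\nabla^F f(x))$. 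Therefore $X(\ln\rho) = \tfrac{d}{dt}\big|_{t=0}\tau^F_{\mathrm{d}\mu}(c(t),\nabla^F f(c(t)))$, where $c(t)$ is the integral curve of $X = \nabla^F f$ through $x$.

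The third and crucial step invokes the hypothesis: the integral curves of $\nabla^F f$ are $F$-unit speed geodesics. This means that $(c(t),\dot c(t)) = (c(t),\nabla^F f(c(t)))$ is precisely an integral curve of the geodesic spray $G$. Hence $\tfrac{d}{dt}\big|_{t=0}\tau^F_{\mathrm{d}\mu}(c(t),\nabla^F f(c(t)))$ is the derivative of $\tau^F_{\mathrm{d}\mu}$ in the direction of $G$ at $(x,\nabla^F f(x))$, which is by definition $S^F_{\mathrm{d}\mu}(x,\nabla^F f(x))$. Combining the three steps yields $\Delta^F f(x) = \Delta^F_{\mathrm{d}\mu} f(x) + S^F_{\mathrm{d}\mu}(x,\nabla^F f(x))$, as claimed; since $x$ was arbitrary this holds on the whole domain.

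The main obstacle, and the point requiring the most care, is the interchange in the second and third steps between the \emph{frozen} reference vector and the \emph{varying} gradient field: the osculation metric $\hat g$ is defined using $\nabla^F f$ as reference vector at \emph{every} point, so when we differentiate $\ln\rho$ along $c(t)$ we are differentiating $\det g_{ij}(x', \nabla^F f(x'))$ in which \emph{both} slots move with $x'$. One must check that this coincides with the directional derivative of the distortion $\tau^F_{\mathrm{d}\mu}$ along the canonical lift $(c(t),\dot c(t))$, which is legitimate precisely because the geodesic hypothesis forces $\nabla^F f(c(t)) = \dot c(t)$, so the curve in $TM\backslash 0$ along which we differentiate is the same in both descriptions. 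I would make this identification explicit in local coordinates, noting that the Cartan tensor contributions (the $y$-derivatives of $g_{ij}$) are exactly what the geodesic spray picks up, so no stray terms survive. This is the content of Lemma 14.1.2 in \cite{Sh2001}, and the proof above is just its reorganization in the present notation.
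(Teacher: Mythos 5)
Your proof is correct and is essentially the argument the paper itself uses: the paper defers Lemma \ref{lemma-5} to Lemma 14.1.2 of \cite{Sh2001}, but its proof of the Lorentz analog, Lemma \ref{lemma-6}, is exactly your computation — write the osculation measure as $e^{\tau^F_{\mathrm{d}\mu}(x,\nabla^F f(x))}\mathrm{d}\mu$, use the Leibniz rule for the Lie derivative to get $\Delta^F f=\Delta^F_{\mathrm{d}\mu}f+(\nabla^F f)\tau^F_{\mathrm{d}\mu}(x,\nabla^F f(x))$, and then use that $(c(t),\dot c(t))=(c(t),\nabla^F f(c(t)))$ is an integral curve of the geodesic spray to identify the last term with $S^F_{\mathrm{d}\mu}(x,\nabla^F f(x))$. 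Your worry about frozen versus varying reference vectors resolves exactly as you say (the two curves in $TM\backslash 0$ coincide under the geodesic hypothesis), and the ``dimensional constant'' you mention is not actually present in $\ln\rho$ (and would in any case be killed by differentiation), so no gap remains.
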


\begin{remark}\label{remark-2}
Lemma \ref{lemma-5} is enough for our later usage. However, similar argument can prove (\ref{023}) when integral curves of $\nabla^F f$
are geodesics with possibly nonconstant speed. So by Lemma 4.1 in \cite{Xu2018},
any transnormal function $f$ on the Finsler manifold $(M,F)$ satisfies
(\ref{023}) in Lemma \ref{lemma-5}.
\end{remark}
\subsection{Isoparametric hypersurface in a Lorentz Finsler manifold}

Let $F:\mathcal{A}\rightarrow\mathbb{R}_{>0}$
be a Lorentz Finsler metric satisfying Condition (C), i.e., for each $x\in M$, $\mathcal{A}_x=\mathcal{A}\cap T_xM$ is a nonempty conic convex open subset of $T_xM$.

For every $x\in M$, the Legendre transformation $\mathcal{L}_x(y)=\langle\cdot,y\rangle^F_y$ for $y\in\mathcal{A}_x$ is a positively 1-homogeneous smooth map with isomorphic tangent maps everywhere, so $\mathcal{L}_x(\mathcal{A}_x)$ is a nonempty conic open subset in
$T_x^*M$.
\begin{lemma} \label{lemma-3}
The Legendre transformation $\mathcal{L}_x$ is injective.
\end{lemma}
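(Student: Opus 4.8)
The plan is to establish injectivity of $\mathcal{L}_x$ by exploiting Condition (C), namely that $\mathcal{A}_x$ is convex, together with the fact that the fundamental tensor has Lorentzian signature $(1,n-1)$. The key observation is that the Legendre transformation is the fiberwise derivative of $\tfrac12 F^2$, and injectivity of a gradient map on a convex domain follows from a suitable convexity/monotonicity property of the generating function along line segments — except that here $F^2$ is \emph{not} convex (the signature is not positive definite), so the naive argument fails and must be replaced by a homogeneity-based trick.

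First I would reduce to showing that $y_1, y_2 \in \mathcal{A}_x$ with $\mathcal{L}_x(y_1) = \mathcal{L}_x(y_2)$ forces $y_1 = y_2$. Using positive $1$-homogeneity of $\mathcal{L}_x$, I can first arrange that the two vectors have the same $F$-value: indeed $\mathcal{L}_x(y_i) = \langle\cdot,y_i\rangle^F_{y_i}$ and $\langle y_i, y_i\rangle^F_{y_i} = F(x,y_i)^2$, so applying the common covector to $y_1$ and to $y_2$ gives $F(x,y_1)^2 = \langle y_1,y_1\rangle^F_{y_1} = \mathcal{L}_x(y_1)(y_1) = \mathcal{L}_x(y_2)(y_1)$ and similarly $\mathcal{L}_x(y_2)(y_2) = F(x,y_2)^2 = \mathcal{L}_x(y_1)(y_2)$; combined with $\mathcal{L}_x(y_2)(y_1) = \mathcal{L}_x(y_1)(y_1)=F(x,y_1)^2$ this yields $F(x,y_1)^2 = \langle y_1, y_2\rangle^F_{y_2} = \langle y_1, y_2\rangle^F_{y_1} = F(x,y_2)^2$. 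Rescaling, I may therefore assume $F(x,y_1) = F(x,y_2) = 1$, i.e. both lie on the indicatrix $S^F_x M = \mathcal{A}_x \cap F^{-1}(1)$.

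Next I would consider the segment $y(s) = (1-s)y_1 + s y_2$ for $s\in[0,1]$, which lies in $\mathcal{A}_x$ by convexity, and study $h(s) := \langle y(s), y_2 - y_1\rangle^F_{y(s)}$. Since $\mathcal{L}_x(y_1)=\mathcal{L}_x(y_2)$ we have $\langle y_1, w\rangle^F_{y_1} = \langle y_2, w\rangle^F_{y_2}$ for all $w$; in particular taking $w = y_2 - y_1$ gives $h(0) = h(1)$. A direct computation of $h'(s)$, using that $\langle\cdot,\cdot\rangle^F_y$ is the Hessian of $\tfrac12 F^2$ and the Euler relations $\langle y, u\rangle^F_y = \tfrac12 (F^2)_{y^i}(y) u^i$ (so that the Cartan-tensor correction terms vanish when contracted with $y(s)$), should give $h'(s) = \langle y_2-y_1, y_2-y_1\rangle^F_{y(s)}$ — exactly the second directional derivative of $\tfrac12 F^2$ along the segment. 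Because $h(0)=h(1)$, Rolle's theorem produces $s_0$ with $h'(s_0) = \langle y_2-y_1,y_2-y_1\rangle^F_{y(s_0)} = 0$. The final step is to rule this out: on the indicatrix, $y_1$ and $y_2$ are $F$-unit vectors, and the hyperplane through the origin parallel to $y_2-y_1$ is transverse to the timelike cone — more precisely, I would show that $y_2 - y_1$ lies in the tangent space $T_{y(s_0)} S^F_x M$ up to a multiple of $y(s_0)$, on which $\langle\cdot,\cdot\rangle^F_{y(s_0)}$ is negative definite (this is the signature $(1,n-1)$ statement, the analog of the computation at the end of the proof of Lemma~\ref{lemma-1}), and separately that the $\mathbb{R}y(s_0)$-component contributes positively, so the only way the quadratic form vanishes on $y_2-y_1$ is $y_2 - y_1 = 0$.

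The main obstacle is the last step: unlike the Riemannian or positive-definite Finsler case, $\langle\cdot,\cdot\rangle^F_{y(s_0)}$ is indefinite, so $\langle y_2-y_1, y_2-y_1\rangle^F_{y(s_0)} = 0$ does not immediately give $y_2 = y_1$ — a null vector would also satisfy it. Handling this requires showing that the chord $y_2 - y_1$ between two points of the indicatrix cannot be null (or more precisely cannot be a nonzero vector on which the form vanishes), which is where convexity of $\mathcal{A}_x$ and the structure of the indicatrix as the boundary of a convex set inside the cone must be used decisively; I expect one needs to track the sign of $F$ along the segment $y(s)$ — it stays positive and, by an argument like equation~(\ref{030}), $F(y(s))$ is a convex function of $s$ with $F(y(0))=F(y(1))=1$, hence $\le 1$ on $[0,1]$, and this pins down enough about the geometry of the chord to force the degenerate case to be trivial. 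Alternatively, one can bypass Rolle entirely and argue that $\mathcal{L}_x$ is a local diffeomorphism (tangent map iso, already noted) that is proper onto its image restricted to the indicatrix, giving injectivity by a covering-space/degree argument; but the convexity route above is more elementary and is, I suspect, what the authors intend.
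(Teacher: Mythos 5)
Your proposal contains two genuine gaps, one in the normalization step and one at the decisive final step. For the normalization: you deduce $F(x,y_1)=F(x,y_2)$ from the chain $F(x,y_1)^2=\langle y_1,y_2\rangle^F_{y_2}=\langle y_1,y_2\rangle^F_{y_1}=F(x,y_2)^2$, but the middle equality compares fundamental tensors at two \emph{different} reference vectors and does not follow from $\mathcal{L}_x(y_1)=\mathcal{L}_x(y_2)$; that hypothesis only gives $\langle w,y_1\rangle^F_{y_1}=\langle w,y_2\rangle^F_{y_2}$ for all $w$, hence $F(x,y_1)^2=\langle y_1,y_2\rangle^F_{y_2}$ and $F(x,y_2)^2=\langle y_2,y_1\rangle^F_{y_1}$, which are a priori unrelated. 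Moreover, even if the $F$-values did agree, rescaling two vectors to the indicatrix turns equal covectors into merely \emph{proportional} ones, so the identity $h(0)=h(1)$ you rely on afterwards is not available after rescaling. The paper avoids this entirely by normalizing and then using only the scale-invariant consequence that the two covectors have the same kernel.

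The more serious gap is the one you flag yourself: after Rolle you only get $s_0$ with $\langle y_2-y_1,y_2-y_1\rangle^F_{y(s_0)}=0$, which in signature $(1,n-1)$ is not a contradiction, and your proposed way out is left as an expectation. Worse, the heuristic you sketch has the wrong sign: for the Lorentz metric the numerator in the analog of (\ref{030}) is the Gram determinant of the form restricted to $\mathrm{span}\{y(s),\,y_2-y_1\}$, a $2$-plane containing the timelike vector $y(s)$, so it is strictly \emph{negative}; hence $\varphi(s)=F(y(s))$ is strictly \emph{concave} along the chord (and $\ge 1$ between two indicatrix points), not convex and $\le 1$ as you write. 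This strict concavity is exactly the engine of the paper's proof: it gives $\varphi'(0)>0>\varphi'(1)$, and the paper then shows that the kernels of the two covectors intersect $\mathrm{span}\{y_0,y_1\}$ in two distinct lines $\mathbb{R}v_0\neq\mathbb{R}v_1$, contradicting equality of kernels. (Note also that, had your normalization been legitimate, this concavity would finish your argument directly at the endpoints, since $h(0)=\varphi'(0)>0>\varphi'(1)=h(1)$ contradicts $h(0)=h(1)$, with no need for Rolle or for excluding null chords.) As written, the proposal does not establish the lemma.
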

\begin{proof} Assume conversely that $\mathcal{L}_x(y'_0)=\mathcal{L}_x(y'_1)$ for different $y'_0,y'_1\in\mathcal{A}_x$. Then $y_0=\tfrac{y'_0}{F(x,y'_0)}$
and $y_1=\tfrac{y'_1}{F(x,y'_1)}$ in $S^{\tilde{F}}_xM$ are linearly independent and $\mathrm{ker}\mathcal{L}_{y_0}=\mathrm{ker}\mathcal{L}_{y_1}$. Since $F$ satisfies Condition (C),
$\varphi(t)=F(y_t)$ with $y_t=(1-t)y_0+ty_1$
is a smooth function for $t\in[0,1]$. Similar calculation as (\ref{029}) shows
\begin{equation*}
\frac{\mathrm{d}^2}{\mathrm{d}t^2}\varphi(t)
=\frac{\langle y_1-y_0,y_1-y_0\rangle^F_{y_t}
\langle y_t,y_t\rangle_{y_t}^F-
(\langle y_t,y_1-y_0\rangle^F_{y_t})^2}{F(x,y_t)^3}
\end{equation*}
is negative for every $t\in[0,1]$ because
$\langle\cdot,\cdot\rangle_{y_t}^F$ has the signature type $(1,n-1)$ and the two vectors $y_1-y_0$ and $y_t$ are linearly independent. Because $\varphi(0)=\varphi(1)=1$, the concavity of $\varphi(t)$ implies $\frac{\mathrm{d}}{\mathrm{d}t}
\varphi(0)>0$ and $\frac{\mathrm{d}}{\mathrm{d}t}
\varphi(1)<0$. So the following vectors
$$v_0=(y_1-y_0)-\langle y_1-y_0,y_0\rangle^F_{y_0}y_0=
(y_1-y_0)-\frac{\mathrm{d}}{\mathrm{d}t}\varphi(0)y_0
=(-1-\frac{\mathrm{d}}{\mathrm{d}t}\varphi(0))y_0+y_1$$
and
$$v_1=(y_1-y_0)-\langle y_1-y_0,y_1\rangle^F_{y_1}
=(y_1-y_0)-\frac{\mathrm{d}}{\mathrm{d}t}\varphi(1)y_1
=-y_0+(1-\frac{\mathrm{d}}{\mathrm{d}t}\varphi(1))y_1$$
are linearly independent.
However, $\mathrm{ker}\mathcal{L}_x(y_i)\cap\mathrm{span}\{y_0,y_1\}
=\mathbb{R}v_i$. This is a contradiction to $\mathrm{ker}\mathcal{L}_x(y_0)=\mathrm{ker}\mathcal{L}_x(y_0)$.
\end{proof}

Denote $\mathcal{L}(x,y)=\mathcal{L}_x(y)$ for every $x\in M$ and $y\in\mathcal{A}_x$. Then we see from Lemma \ref{lemma-3} that
$\mathcal{L}$ is a diffeomorphism from $\mathcal{A}$ to the conic open subset $\mathcal{L}(\mathcal{A})=\coprod_{x\in M}\mathcal{L}_{x}(\mathcal{A}_x)\subset T^*M\backslash0$.

Now we consider a regular real function $f$ defined around $x_0\in M$. Assume that
the value of $\mathrm{d}f$ at $x_0$ is contained in $\mathcal{L}_{x_0}(\mathcal{A}_{x_0})$, then $\nabla^F f=\mathcal{L}^{-1}(\mathrm{d}f)$ uniquely defines a smooth tangent vector field on some sufficiently small neighborhood $\mathcal{U}$ of $x_0$. Equivalently, $\nabla^F f$ can be determined by $\langle u,\nabla^F f\rangle^F_{\nabla f}=\mathrm{d}f(u)$, $\forall x\in\mathcal{U}$ and $u\in T_xM$. We will call it the {\it gradient field} of $f$.

Using $\nabla^F f$ around $x_0$, the Laplacian $\Delta^F_{\mathrm{d} \mu}f=\mathrm{div}_{\mathrm{d} \mu}\nabla^F f$ for the smooth measure $\mathrm{d}\mu$ can be similarly defined as in Finsler geometry.
When  $\mathrm{d}\mu=\sqrt{|\det(g_{ij}(x,\nabla f))|}\mathrm{d}x^1\cdots \mathrm{d}x^n$, i.e., it is the one induced by the osculation Lorentz metric $g^F_{\nabla f}$, the corresponding Laplacian is simply denoted as
$\Delta^F f$, which coincides with the Laplacian of $f$ for the
osculation Lorentz metric $g^F_{\nabla^F f}$ \cite{Ne1983}.


We call $f$ {\it transnormal} if $F(\nabla^F f)$ only depends on the values of $f$. Further more, we call the transnormal function $f$ {\it $\mathrm{d}\mu$-isoparametric} if $\Delta^F_{\mathrm{d} \mu}f$ only depends on the values of $f$. The level set of a $d\mu$-isoparametric function $f$ is called a {\it $\mathrm{d}\mu$-isoparametric} hypersurface. If we replace $\Delta^F_{d\mu} f$ by $\Delta^F f$
in above notions,  {\it isoparametric function} and {\it isoparametric hypersurface} are defined.

The relation between $\Delta^F_{\mathrm{d}\mu}f$ and $\Delta^F f$
can be revealed by the following analog of Lemma \ref{lemma-5}.

\begin{lemma}\label{lemma-6}
Suppose $f$ is a transnormal function on a Lorentz Finsler manifold $(M,F)$ such that the integral curves of $\nabla^F f$ are $F$-unit speed geodesics, then we have
\begin{equation}\label{024}
\Delta^F f(x)=\Delta^F_{\mathrm{d} \mu} f(x)+S^F_{d\mu}(x,\nabla^F f(x)),
\end{equation}
in which the S-curvature $S^F_{d\mu}$ for the smooth measure $\mathrm{d}\mu$.
\end{lemma}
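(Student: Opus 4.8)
The plan is to mimic the proof of Lemma \ref{lemma-5} (i.e.\ the argument for Lemma 14.1.2 in \cite{Sh2001}) but carried out with respect to the osculation Lorentz metric $g^F_{\nabla^F f}$ rather than a Riemannian osculation metric, checking at each step that the signature does not interfere. First I would fix a point $x$ in the domain and work in adapted local coordinates: since $f$ is transnormal, after normalizing we may assume $F(\nabla^F f)\equiv c(f)$, and the hypothesis says each integral curve $\gamma(s)$ of $\nabla^F f$ is an $F$-unit speed geodesic, so $\nabla^F f=\dot\gamma$ along such a curve and the reference vector $y=\nabla^F f(x)\in\mathcal{A}_x$ varies smoothly. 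I would then introduce the osculation Lorentz metric $\hat g=g^F_{\nabla^F f}$ on the neighborhood $\mathcal U$; by the definition adopted just above the statement, $\Delta^F f$ is exactly the $\hat g$-Laplacian of $f$, and $\nabla^F f$ is the $\hat g$-gradient of $f$ (this is the standard fact that for the osculation metric the nonlinear gradient and the linear gradient coincide at the chosen direction, valid regardless of signature since $\hat g$ is nondegenerate).

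The heart of the computation is to compare the $\hat g$-divergence of the vector field $X=\nabla^F f$ with its $\mathrm{d}\mu$-divergence. Writing $\mathrm{d}\mu=\sigma(x)\,\mathrm{d}x^1\cdots\mathrm{d}x^n$ and $\mathrm{d}\mu_{\hat g}=\sqrt{|\det\hat g|}\,\mathrm{d}x^1\cdots\mathrm{d}x^n$, we have the general identity
\begin{equation*}
\mathrm{div}_{\mathrm{d}\mu_{\hat g}}X-\mathrm{div}_{\mathrm{d}\mu}X
= X\bigl(\ln\sqrt{|\det\hat g|}-\ln\sigma\bigr)
= X\bigl(\tau^F_{\mathrm{d}\mu}(x,\nabla^F f(x))\bigr),
\end{equation*}
where $\tau^F_{\mathrm{d}\mu}$ is the distortion function defined in Section 2.1. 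So it remains to identify $X\bigl(\tau^F_{\mathrm{d}\mu}(x,\nabla^F f(x))\bigr)$ with the S-curvature $S^F_{\mathrm{d}\mu}(x,\nabla^F f(x))$. By definition the S-curvature is the derivative of $\tau^F_{\mathrm{d}\mu}$ along the geodesic spray; since the integral curve $\gamma(s)$ of $X$ through $x$ is an $F$-unit speed geodesic, its canonical lift $(\gamma(s),\dot\gamma(s))=(\gamma(s),\nabla^F f(\gamma(s)))$ is precisely an integral curve of the geodesic spray $G$, hence $\tfrac{d}{ds}\big|_{0}\tau^F_{\mathrm{d}\mu}(\gamma(s),\nabla^F f(\gamma(s)))=S^F_{\mathrm{d}\mu}(x,\nabla^F f(x))$, and the left-hand side is $X(\tau^F_{\mathrm{d}\mu}(\cdot,\nabla^F f(\cdot)))$ at $x$. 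Combining the two displays and using $\Delta^F f=\mathrm{div}_{\mathrm{d}\mu_{\hat g}}X$, $\Delta^F_{\mathrm{d}\mu}f=\mathrm{div}_{\mathrm{d}\mu}X$ gives \eqref{024}.

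The step I expect to be the main obstacle is justifying that $\Delta^F f$, defined as $\mathrm{div}_{\mathrm{d}\mu_{\hat g}}\nabla^F f$ with $\mathrm{d}\mu_{\hat g}$ the volume of the osculation Lorentz metric, really equals the linear Laplace--Beltrami operator of $g^F_{\nabla^F f}$ applied to $f$ — i.e.\ that $\nabla^F f$ (the nonlinear/Legendre gradient) agrees with the $g^F_{\nabla^F f}$-gradient of $f$. In the Riemannian-Finsler setting this is routine, but here one must confirm it survives the indefinite signature; the point is purely algebraic at each fixed point: $g^F_y(\cdot,y)=\langle\cdot,y\rangle^F_y=\mathcal L_x(y)=\mathrm{d}f$ when $y=\nabla^F f(x)$, and nondegeneracy of $g^F_y$ (guaranteed by the Lorentz condition) makes this characterization of the gradient unambiguous, exactly as exploited via Lemma \ref{lemma-3}. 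Once this identification is in hand, everything else is the same bookkeeping as in the Finsler case, with $\det$ replaced by $|\det|$ throughout, and no positivity of $\hat g$ is ever used. I would also remark, in parallel with Remark \ref{remark-2}, that the constant-speed geodesic hypothesis can be relaxed to geodesics of possibly varying speed at the cost of a slightly longer computation, but that the stated version suffices for the application to $\tilde f$.
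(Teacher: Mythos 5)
Your proposal is correct and follows essentially the same route as the paper's proof: you compare the osculation measure $\sqrt{|\det g^F_{\nabla^F f}|}\,\mathrm{d}x^1\cdots\mathrm{d}x^n$ with $\mathrm{d}\mu$ via the distortion function $\tau^F_{\mathrm{d}\mu}(x,\nabla^F f(x))$, and then use that the lifts of the integral curves of $\nabla^F f$ are integral curves of the geodesic spray to identify $(\nabla^F f)\tau^F_{\mathrm{d}\mu}$ with $S^F_{\mathrm{d}\mu}(x,\nabla^F f(x))$, exactly as in the paper. The point you flag as a possible obstacle (that $\nabla^F f$ is the $g^F_{\nabla^F f}$-gradient and $\Delta^F f$ the corresponding divergence) is already built into the paper's definition of $\Delta^F f$, so no extra work is needed there.
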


\begin{proof}
Denote $\sigma(x)$ the
positive smooth function in the local presentation
$\mathrm{d}\mu=\sigma(x)\mathrm{d}x^1\cdots \mathrm{d}x^n$ for the smooth measure $\mathrm{d}\mu$. The divergence operator defining $\Delta^F f$ is with respect to the smooth measure
\begin{eqnarray*}
\sqrt{|\det(g_{ij}(x,\nabla^F f(x)))|}\mathrm{d}x^1\cdots \mathrm{d}x^n
&=&\frac{\sqrt{|\det(g_{ij}(x,\nabla^F f(x)))|}}{\sigma(x)}\cdot\sigma(x)\mathrm{d}x^1\cdots \mathrm{d}x^n\\
&=&e^{\tau^F_{\mathrm{d}\mu}(x,\nabla^F f(x))} \mathrm{d}\mu,
\end{eqnarray*}
in which $\tau^F_{\mathrm{d}\mu}$ is the distortion function defining the S-curvature $S^F_{\mathrm{d}\mu}$ for
$F$ and $\mathrm{d}\mu$. Using the definition (\ref{022}) of $\Delta^F f$ and the Lebniz property for Lie derivative, we get
\begin{eqnarray*}
\Delta^F f&=&
\mathrm{div}_{e^{\tau^F_{\mathrm{d}\mu}(x,\nabla^F f(x))}\mathrm{d}\mu}\nabla^F f=\frac{L_{\nabla^F f}(e^{\tau^F_{\mathrm{d}\mu}(x,\nabla^F f(x))}\mathrm{d}\mu)}{e^{\tau^F_{\mathrm{d}\mu}(x,\nabla^F f(x))}\mathrm{d}\mu}\nonumber\\
&=&\frac{[(\nabla^F f)e^{\tau^F_{\mathrm{d}\mu}
(x,\nabla^F f(x))}] \mathrm{d}\mu+e^{\tau^F_{\mathrm{d}\mu}
(x,\nabla^F f(x))} L_{\nabla^F f}\mathrm{d}\mu}{e^{\tau^F_{\mathrm{d}\mu}
(x,\nabla^F f(x)))}\mathrm{d}\mu}\nonumber\\
&=&\Delta^F_{\mathrm{d} \mu} f+(\nabla^F f)\tau^F_{\mathrm{d}\mu}(x,\nabla^F f(x)).
\end{eqnarray*}

Let $\gamma(t)$ be the integral curve of $\nabla^F f$ with $\gamma(0)=x$. By our assumption, it is a $F$-unit speed geodesic.
So
\begin{eqnarray*}
(\nabla^F f)\tau^F_{\mathrm{d}\mu}(x,\nabla^F f(x))
=\frac{\mathrm{d}}{\mathrm{d}t}|_{t=0}\tau^F_{
\mathrm{d}\mu^F_{\mathrm{d}\mu}}(\gamma(t),\dot{\gamma}(t))
=S^F_{\mathrm{d}\mu}(x,\nabla^F f(x)),
\end{eqnarray*}
which ends the proof of Lemma \ref{lemma-5}.
\end{proof}


\begin{remark} We believe that similar calculation as in Finsler geometry can show
the gradient field $\nabla^F f$ for a transnormal function $f$ on a Lorentz Finsler manifold generates geodesics with possibly nonconstant speeds, i.e., Lemma \ref{lemma-6} is valid for all transnormal functions.
\end{remark}
\section{Correspondence between isoparametric hypersurface}
\subsection{Proof of Theorem \ref{main-thm}}
Let $F$ be a Finsler metric on $M$ and $V$ a homothetic vector field
for $F$ with dilation $c$ and $F(-V(x))>1$ everywhere.
Denote $\Psi_t$ the flow of local diffeomorphisms generated by $V$.
Let $\tilde{F}:\mathcal{A}\rightarrow\mathbb{R}_{>0}$ be the Lorentz Finsler metric induced by the navigation process with the datum $(F,V)$. For each $x\in M$, $\mathcal{A}_x=\mathcal{A}\cap T_xM$ is the conic convex open subset consisting of all $\tilde{y}=y+
F(x,y)V(x)$ with $\langle y,V(x)\rangle_y^F<-F(x,y)$.

Let $f$ be a normalized transnormal function locally defined around $x_0$ in the Finsler manifold
$(M,F)$ satisfying
\begin{equation}\label{028}
f(x_0)=0,\quad F(\nabla f)\equiv1,\quad \mbox{and}\quad \langle\nabla f(x),V(x)\rangle^F_{\nabla f(x)}<-1\mbox{ around }x_0.
\end{equation}

Denote $M_t=f^{-1}(t)$ the level sets of $f$ in some sufficiently small neighborhood of $x_0$, and define the smooth map $\Psi$ around $x_0$ such that $\Psi|_{M_{\alpha_c(t)}}=\Psi_t$.
\begin{lemma}\label{lemma-4}
$\Psi$ is an orientation reversing diffeomorphism around $x_0$.
\end{lemma}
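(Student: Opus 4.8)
The plan is to give $\Psi$ an explicit closed form, differentiate it at $x_0$, recognize the differential as a rank one perturbation of the identity, and then read off both invertibility and the sign of the Jacobian from the hypothesis $\langle\nabla^F f(x_0),V(x_0)\rangle^F_{\nabla^F f(x_0)}<-1$ in (\ref{028}).

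First I would record that, since $\alpha_c'(0)=1$, the map $\alpha_c$ restricts to a diffeomorphism from a small interval around $0$ onto its image; as $f$ is regular with $f(x_0)=0$, for every $x$ in a sufficiently small neighborhood $\mathcal{U}$ of $x_0$ there is a unique $t$ near $0$ with $f(x)=\alpha_c(t)$, namely $t=\alpha_c^{-1}(f(x))$, and this is precisely the $t$ for which $x\in M_{\alpha_c(t)}$. Hence the prescription $\Psi|_{M_{\alpha_c(t)}}=\Psi_t$ amounts to
\[
\Psi(x)=\Psi_{\alpha_c^{-1}(f(x))}(x),\qquad x\in\mathcal{U}.
\]
After shrinking $\mathcal{U}$ so that $\Psi_t(x)$ is defined for all $x\in\mathcal{U}$ and all $t$ in the relevant interval, this exhibits $\Psi$ as the composition of the smooth flow map $(t,x)\mapsto\Psi_t(x)$ with the smooth map $x\mapsto(\alpha_c^{-1}(f(x)),x)$, so $\Psi$ is smooth on $\mathcal{U}$, and $\Psi(x_0)=\Psi_0(x_0)=x_0$ since $\alpha_c^{-1}(0)=0$ and $\Psi_0=\mathrm{id}$.

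Next I would compute $\mathrm{d}\Psi_{x_0}$. Given $u\in T_{x_0}M$, pick a curve $\beta(\tau)$ with $\beta(0)=x_0$ and $\dot\beta(0)=u$, and set $h(\tau)=\alpha_c^{-1}(f(\beta(\tau)))$, so $h(0)=0$ and $h'(0)=(\alpha_c^{-1})'(0)\,\mathrm{d}f_{x_0}(u)=\mathrm{d}f_{x_0}(u)$. Differentiating $\Psi(\beta(\tau))=\Psi_{h(\tau)}(\beta(\tau))$ at $\tau=0$ and using $\Psi_0=\mathrm{id}$, $\mathrm{d}(\Psi_0)_{x_0}=\mathrm{id}$, and $\tfrac{\mathrm{d}}{\mathrm{d}t}|_{t=0}\Psi_t(x_0)=V(x_0)$, I obtain
\[
\mathrm{d}\Psi_{x_0}(u)=u+\mathrm{d}f_{x_0}(u)\,V(x_0)=u+\langle u,\nabla^F f(x_0)\rangle^F_{\nabla^F f(x_0)}\,V(x_0),
\]
the last equality being the defining property of the gradient field. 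Thus $\mathrm{d}\Psi_{x_0}=\mathrm{Id}+V(x_0)\otimes\mathrm{d}f_{x_0}$ is a rank one perturbation of the identity, whose eigenvalues are $1$ with multiplicity $n-1$ and $1+\mathrm{d}f_{x_0}(V(x_0))$, so
\[
\det\mathrm{d}\Psi_{x_0}=1+\mathrm{d}f_{x_0}(V(x_0))=1+\langle V(x_0),\nabla^F f(x_0)\rangle^F_{\nabla^F f(x_0)}<0
\]
by (\ref{028}). In particular $\mathrm{d}\Psi_{x_0}$ is an isomorphism, so the inverse function theorem lets $\Psi$ restrict to a diffeomorphism of a connected neighborhood of $x_0$ onto its (open) image; on such a neighborhood $\det\mathrm{d}\Psi$ is continuous and nowhere zero, and negative at $x_0$, hence negative throughout, which is exactly what ``orientation reversing around $x_0$'' means.

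I do not expect a serious obstacle here. The only care needed is in the first step, where one must verify that the level set description of $\Psi$ really coincides with $x\mapsto\Psi_{\alpha_c^{-1}(f(x))}(x)$ and that all the maps involved share a common domain, and in the second step, where the osculation identification $\mathrm{d}f_{x_0}(\cdot)=\langle\cdot,\nabla^F f(x_0)\rangle^F_{\nabla^F f(x_0)}$ must be kept straight so that (\ref{028}) can be invoked. The real content of the lemma is the rank one formula for $\mathrm{d}\Psi_{x_0}$ together with the sign condition $\langle\nabla^F f,V\rangle^F_{\nabla^F f}<-1$, which is precisely why this hypothesis is placed on $f$.
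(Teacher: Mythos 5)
Your proof is correct and follows essentially the same route as the paper: both reduce the lemma to computing the Jacobian of $\Psi$ as a rank-one perturbation of the identity, obtaining $\det = 1+\langle\nabla^F f,V\rangle^F_{\nabla^F f}<0$ from (\ref{028}) and concluding via the inverse function theorem. The only cosmetic difference is that you differentiate the closed-form expression $\Psi(x)=\Psi_{\alpha_c^{-1}(f(x))}(x)$ at $x_0$, while the paper evaluates $\Psi_*$ at points of $M_0$ using the orthogonal splitting $T_xM=T_xM_0\oplus\mathbb{R}\nabla^F f(x)$; the resulting determinant and sign argument are identical.
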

\begin{proof}The smooth map $\Psi$ fixes each
$x\in M_0$. We only need to observe the tangent map $\Psi_*:T_xM\rightarrow T_xM$ has negative determinant for $x\in M_0$ sufficiently close to $x_0$. With respect to the inner product $\langle\cdot,\cdot\rangle_{\nabla^F f(x)}^F$, $T_xM$ can be orthogonally decomposed as the sum of $T_xM_0$ and $\mathbb{R}\nabla^F f(x)$.
The restriction of $\Psi_*$ to $T_xM_0$ is identity, and it maps $\nabla^F f(x)$ to $\nabla^F f(x)+V(x)$.
So $\det(\Psi_*)=\langle\nabla^F f(x),\nabla^F f(x)+V(x)\rangle^F_{\nabla^F f(x)}=1+\langle\nabla^F f(x),V(x)\rangle^F_{\nabla^F f(x)}$. When $x\in M_0$ is sufficiently close to $x_0$,
$\langle\nabla^F f(x),V(x)\rangle^F_{\nabla^F f(x)}<-F(x,\nabla^F f(x))=-1$, so
we have $\det(\Psi_*)<0$.
\end{proof}

Lemma \ref{lemma-4} enables us to define the smooth function $\tilde{f}$ around $x_0$ such that $\tilde{f}^{-1}(t)=\widetilde{M}_t=
\Psi(M_{\alpha_c(t)})$.

In the discussion below, we fix an arbitrary $t$ which is sufficiently close to 0, $x\in M_{\alpha_c(t)}$ and
$\tilde{x}=\Psi(x)=\Psi_t(x)$ which are sufficiently close to $x_0$. Denote $\gamma(\cdot)$ the integral curve of
$\nabla^F f$ with $\gamma(\alpha_c(t))=x$ and $\tilde{\gamma}(\cdot)=\Psi(\gamma(\alpha_c(\cdot)))$.
Then $\tilde{\gamma}(0)=\gamma(0)\in M_0=\widetilde{M}_0$ is sufficiently close to $x_0$, $\tilde{\gamma}(t)=\tilde{x}$, $\tilde{f}(\tilde{x})=t$, $f(x)=\alpha_c(t)$ and
$2cf(x)+1=e^{2ct}$.

Firstly, we prove the transnormal property for $\tilde{f}$.

By Lemma 4.1 in \cite{Xu2018}, $\gamma(\cdot)$ is a unit speed geodesic on $(M,F)$. Then
by Lemma \ref{lemma-2}, $\tilde{\gamma}(\cdot)$ is a unit speed geodesic on $(M,\tilde{F})$, and $T_{\tilde{\gamma}(t)}\overline{M}_t
=\Psi_*(T_{\gamma(\alpha_c(t))}M_{\alpha_c(t)})
=(\Psi_t)_*(
T_{\gamma(\alpha_c(t))}M_{\alpha_c(t)})$ is $g^{\tilde{F}}_{\dot{\tilde{\gamma}}(t)}$-orthogonal to $\dot{\tilde{\gamma}}(t)$.
So $\nabla^{\tilde{F}}\tilde{f}$ exists around $x_0$.
For arbitrary $\tilde{x}$, $\nabla^{\tilde{F}}\tilde{f}(\tilde{x})=\dot{\tilde{\gamma}}(t)$ which is a $\tilde{F}$-unit vector.
Obviously, $\tilde{f}(x_0)=f(x_0)=0$. So
we see
\begin{lemma} \label{lemma-12}
The function
$\tilde{f}$ is
a transnormal function for $\tilde{F}$ and it is normalized at $x_0$.
\end{lemma}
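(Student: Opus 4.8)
The plan is to read the statement off from the explicit description of $\tilde f$ and of its candidate gradient field. Since $\tilde f$ is normalized at $x_0$ precisely when $\tilde F(\nabla^{\tilde F}\tilde f)\equiv 1$ near $x_0$ and $\tilde f(x_0)=0$, it suffices to produce $\nabla^{\tilde F}\tilde f$, check that at every point near $x_0$ it is an $\tilde F$-unit vector (this already gives the transnormal property), and verify the trivial normalization at $x_0$. The one genuinely new point compared with the Finsler situation is that for a Lorentz Finsler metric the gradient field need neither exist nor be unique, so it must be produced by hand using the geodesic correspondence of Lemma \ref{lemma-2} and then pinned down via Condition (C).

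First I would recall that, $f$ being a normalized transnormal function, each integral curve $\gamma(\cdot)$ of $\nabla^F f$ is an $F$-unit speed geodesic by Lemma 4.1 in \cite{Xu2018}, and that, because $\gamma(0)\in M_0$ lies near $x_0$ and $\dot\gamma(0)=\nabla^F f(\gamma(0))$, the normalization (\ref{028}) gives $c_0=\langle\dot\gamma(0),V(\gamma(0))\rangle^F_{\dot\gamma(0)}<-1$. Lemma \ref{lemma-2} then applies and yields three facts: $\tilde\gamma(t)=\Psi_t(\gamma(\alpha_c(t)))$ is an $\tilde F$-unit speed geodesic; $\dot{\tilde\gamma}(t)$ lies in $\mathcal A_{\tilde\gamma(t)}$; and the subspace $(\Psi_t)_*(T_{\gamma(\alpha_c(t))}M_{\alpha_c(t)})=T_{\tilde\gamma(t)}\widetilde M_t$ is $g^{\tilde F}_{\dot{\tilde\gamma}(t)}$-orthogonal to $\dot{\tilde\gamma}(t)$, where one uses that $T_{\gamma(\alpha_c(t))}M_{\alpha_c(t)}$ is $g^F_{\dot\gamma(\alpha_c(t))}$-orthogonal to $\dot\gamma(\alpha_c(t))=\nabla^F f(\gamma(\alpha_c(t)))$.

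Next I would identify $\nabla^{\tilde F}\tilde f(\tilde x)$ with $\dot{\tilde\gamma}(t)$ for $\tilde x=\tilde\gamma(t)\in\widetilde M_t$. The one-form $\mathcal L_{\tilde x}(\dot{\tilde\gamma}(t))=\langle\cdot,\dot{\tilde\gamma}(t)\rangle^{\tilde F}_{\dot{\tilde\gamma}(t)}$ annihilates $T_{\tilde x}\widetilde M_t$, and so does $\mathrm d\tilde f(\tilde x)$; moreover both take the value $1$ on $\dot{\tilde\gamma}(t)$, since $\mathrm d\tilde f(\dot{\tilde\gamma}(t))=\tfrac{\mathrm d}{\mathrm ds}|_{s=t}\tilde f(\tilde\gamma(s))=\tfrac{\mathrm d}{\mathrm ds}|_{s=t}s=1$ and $\langle\dot{\tilde\gamma}(t),\dot{\tilde\gamma}(t)\rangle^{\tilde F}_{\dot{\tilde\gamma}(t)}=\tilde F(\dot{\tilde\gamma}(t))^2=1$. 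Hence $\mathrm d\tilde f(\tilde x)=\mathcal L_{\tilde x}(\dot{\tilde\gamma}(t))\in\mathcal L_{\tilde x}(\mathcal A_{\tilde x})$, so $\nabla^{\tilde F}\tilde f=\mathcal L^{-1}(\mathrm d\tilde f)$ is well-defined near $x_0$ (the Legendre transform $\mathcal L_{\tilde x}$ being injective by Lemma \ref{lemma-3}) and equals $\dot{\tilde\gamma}(t)$. Therefore $\tilde F(\nabla^{\tilde F}\tilde f)\equiv 1$ near $x_0$, in particular it depends only on the value of $\tilde f$, so $\tilde f$ is transnormal; and $x_0\in M_0=\widetilde M_0$ because $\Psi|_{M_0}=\Psi_0=\mathrm{id}$, whence $\tilde f(x_0)=0$. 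Thus $\tilde f$ is normalized at $x_0$.

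The only point requiring care — the main potential obstacle — is the well-definedness of the gradient field in the Lorentz Finsler setting: one must know both that $\mathrm d\tilde f$ takes values in $\mathcal L(\mathcal A)$ and that its preimage under $\mathcal L$ is unique. Both are secured above: the first by exhibiting $\dot{\tilde\gamma}(t)\in\mathcal A_{\tilde\gamma(t)}$ with $\mathcal L_{\tilde x}(\dot{\tilde\gamma}(t))=\mathrm d\tilde f(\tilde x)$ (Lemma \ref{lemma-2}), the second by the injectivity of $\mathcal L_{\tilde x}$ coming from Condition (C) (Lemma \ref{lemma-3}). The smoothness of $\tilde f$ and of $\nabla^{\tilde F}\tilde f$ on a neighbourhood of $x_0$ rests on the diffeomorphism property of $\Psi$ established in Lemma \ref{lemma-4}.
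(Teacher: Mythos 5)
Your proof is correct and follows essentially the same route as the paper: apply Lemma 4.1 of \cite{Xu2018} and Lemma \ref{lemma-2} to get that $\tilde\gamma$ is an $\tilde F$-unit speed geodesic with $(\Psi_t)_*(T_{\gamma(\alpha_c(t))}M_{\alpha_c(t)})$ $g^{\tilde F}_{\dot{\tilde\gamma}(t)}$-orthogonal to $\dot{\tilde\gamma}(t)$, identify $\nabla^{\tilde F}\tilde f(\tilde x)=\dot{\tilde\gamma}(t)$, and conclude $\tilde F(\nabla^{\tilde F}\tilde f)\equiv1$ with $\tilde f(x_0)=0$. The only difference is that you spell out the identification $\mathrm d\tilde f=\mathcal L_{\tilde x}(\dot{\tilde\gamma}(t))$ and the uniqueness via Lemma \ref{lemma-3} explicitly, which the paper leaves implicit in its definition of the gradient field.
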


Secondly, we compare
$\Delta^F_{\mathrm{d} \mu^F_{\mathrm{BH}}} f(x)$ and $\Delta^{\tilde{F}}_{\mathrm{d} \mu^F_{\mathrm{BH}}}\tilde{f}(\tilde{x})=(\Psi^*\Delta^{\tilde{F}}_{\mathrm{d} \mu^F_{\mathrm{BH}}}\tilde{f})(x)$.
Since $\Delta^F_{\mathrm{d} \mu^F_{\mathrm{BH}}} f=\mathrm{div}_{\mathrm{d} \mu^F_{\mathrm{BH}}}\nabla^F f$ and
$\Delta^{\tilde{F}}_{\mathrm{d} \mu^F_{\mathrm{BH}}}\tilde{f}
=\mathrm{div}_{\mathrm{d} \mu^F_{\mathrm{BH}}}\nabla^{\tilde{F}}\tilde{f}$,
we need to compare $\nabla^F f$ and $\mathrm{d}\mu^F_{\mathrm{BH}}$ at $x$ with
$\nabla^{\tilde{F}}\tilde{f}$ and $\mathrm{d}\mu^F_{\mathrm{BH}}$ at $\tilde{x}$ respectively.

Direct calculation shows
\begin{equation}\label{019}
\nabla^{\tilde{F}}\tilde{f}(\tilde{x})=\dot{\tilde{\gamma}}(t)=
\Psi_*(\frac{\mathrm{d} }{\mathrm{d} t}\gamma(\alpha_c(t)))
=\Psi_*(e^{2ct}\dot{\gamma}(\alpha_c(t)))
=\Psi_*((2cf(x)+1)\nabla^F f(x)).
\end{equation}

Denote $\Phi=\Psi_{-t}\circ\Psi$. It is a local diffeomorphism around $x_0$ fixing each point in $M_{\alpha_c(t)}$ (so it fixes $x$). The tangent map $\Phi_*:T_xM\rightarrow T_xM$ maps $T_xM_{\alpha_c(t)}$ identically to itself. Meanwhile, $\Phi_*$ maps
$\frac{\mathrm{d} }{\mathrm{d} t}{\gamma}(\alpha_c(t))$ to
$$(\Psi_{-t})_*(\dot{\tilde{\gamma}}(t))=
(\Psi_{-t})_*((\Psi_t)_*(\frac{\mathrm{d} }{\mathrm{d} t}{\gamma}(\alpha_c(t))+
V(\tilde{x}))=\frac{\mathrm{d} }{\mathrm{d} t}{\gamma}(\alpha_c(t)))+V(x).$$
So by (\ref{008}),
\begin{eqnarray*}
\Phi^*(\mathrm{d}\mu^F_{\mathrm{BH}}(x))&=&|\det(\Phi_*|_{T_xM})|\cdot \mathrm{d}\mu^F_{\mathrm{BH}}(x)\nonumber\\
&=&|\langle\dot{\gamma}(\alpha_c(t))+e^{-2ct}V(\alpha_c(t)),
\dot{\gamma}(\alpha_c(t))
\rangle^F_{\dot{\gamma}(x)}|\cdot \mathrm{d}\mu^F_{\mathrm{BH}}\nonumber\\
&=&|1+e^{-2ct}(c_0(x)-2c\alpha_c(t))|\cdot \mathrm{d}\mu^F_{\mathrm{BH}}(x)\nonumber\\
&=&e^{-2ct}|1+c_0(x)| \mathrm{d}\mu^F_{\mathrm{BH}}(x),
\end{eqnarray*}
where $c_0(x)=\langle\dot{\gamma}(0),V(\gamma(0))
\rangle^F_{\dot{\gamma}(0)}<-1$ is a smooth function around $x_0$ which is constant along each integral curve of $\nabla^F f$ (so we have $(\nabla^F f) c_0(x)=0$).

By the homothetic property of $\Psi_t$, i.e., $\Psi_t^*(\mathrm{d}\mu^F_{\mathrm{BH}}
(\tilde{x}))=e^{-2cnt}\mathrm{d}\mu^F_{\mathrm{BH}}(x)$, we have
\begin{eqnarray}\label{020}
\Psi^*(\mathrm{d}\mu^F_{\mathrm{BH}}(\tilde{x}))&=&
\Phi^*\Psi_t^*(\mathrm{d}\mu^F_{\mathrm{BH}}(\tilde{x}))
=\Phi^*(e^{-2cnt}\mathrm{d}\mu^F_{\mathrm{BH}}(x))\nonumber\\
&=&e^{-2c(n+1)t}|1+c_0(x)|\mathrm{d}\mu^F_{\mathrm{BH}}(x)\nonumber\\
&=&|1+c_0(x)|(2cf(x)+1)^{-n-1}\mathrm{d}\mu^F_{\mathrm{BH}}(x).
\end{eqnarray}

Now we are ready to prove the following lemma comparing $\Delta^F_{\mathrm{d} \mu^F_{\mathrm{BH}}} f$
and $\Delta^{\tilde{F}}_{\mathrm{d} \mu^F_{\mathrm{BH}}}\tilde{f}$.
\begin{lemma}\label{lemma-7}
Let $f$ be a normalized transnormal function for $F$ satisfying (\ref{028}), then
\begin{equation}\label{021}
\Psi^*\Delta^{\tilde{F}}_{\mathrm{d} \mu^F_{\mathrm{BH}}}
\tilde{f}=(2cf+1)
\Delta^F_{\mathrm{d} \mu^F_{\mathrm{BH}}}f-2cn.
\end{equation}
\end{lemma}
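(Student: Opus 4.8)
The plan is to compute $\Delta^{\tilde F}_{\mathrm{d}\mu^F_{\mathrm{BH}}}\tilde f$ at $\tilde x$ by pulling everything back along $\Psi$ to the point $x$, using the two ingredients already assembled: formula (\ref{019}) comparing $\nabla^{\tilde F}\tilde f(\tilde x)$ with $\nabla^F f(x)$, and formula (\ref{020}) comparing $\Psi^*(\mathrm{d}\mu^F_{\mathrm{BH}}(\tilde x))$ with $\mathrm{d}\mu^F_{\mathrm{BH}}(x)$. Since $\Delta^{\tilde F}_{\mathrm{d}\mu^F_{\mathrm{BH}}}\tilde f=\mathrm{div}_{\mathrm{d}\mu^F_{\mathrm{BH}}}\nabla^{\tilde F}\tilde f$ and divergence is defined through the Lie derivative, the naturality of the Lie derivative under diffeomorphisms gives $\Psi^*(\mathrm{div}_{\mathrm{d}\mu^F_{\mathrm{BH}}}\nabla^{\tilde F}\tilde f)=\mathrm{div}_{\Psi^*\mathrm{d}\mu^F_{\mathrm{BH}}}(\Psi^*\nabla^{\tilde F}\tilde f)$, where $\Psi^*\nabla^{\tilde F}\tilde f$ denotes the $\Psi$-related vector field $(\Psi^{-1})_*\circ\nabla^{\tilde F}\tilde f\circ\Psi$. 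So I would first rewrite the left side of (\ref{021}) as a divergence at $x$ of a pulled-back vector field against a pulled-back measure.

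Next I substitute. By (\ref{019}), $(\Psi^{-1})_*(\nabla^{\tilde F}\tilde f(\tilde x))=(2cf(x)+1)\nabla^F f(x)$; writing $h(x)=2cf(x)+1=e^{2ct}$ this says the pulled-back gradient field equals $h\cdot\nabla^F f$. By (\ref{020}), $\Psi^*\mathrm{d}\mu^F_{\mathrm{BH}}=|1+c_0(x)|\,h^{-n-1}\,\mathrm{d}\mu^F_{\mathrm{BH}}$. Hence
\begin{equation*}
\Psi^*\Delta^{\tilde F}_{\mathrm{d}\mu^F_{\mathrm{BH}}}\tilde f
=\mathrm{div}_{|1+c_0|h^{-n-1}\mathrm{d}\mu^F_{\mathrm{BH}}}\bigl(h\,\nabla^F f\bigr).
\end{equation*}
Now I expand this using the Leibniz rule for the Lie derivative exactly as in the proof of Lemma \ref{lemma-6}: for a positive weight $\rho$ and a vector field $X=h\nabla^F f$, $\mathrm{div}_{\rho\,\mathrm{d}\mu}X=\mathrm{div}_{\mathrm{d}\mu}X+X(\ln\rho)=\mathrm{div}_{\mathrm{d}\mu}X+(X\rho)/\rho$. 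With $X=h\nabla^F f$ we have $\mathrm{div}_{\mathrm{d}\mu^F_{\mathrm{BH}}}(h\nabla^F f)=h\,\mathrm{div}_{\mathrm{d}\mu^F_{\mathrm{BH}}}\nabla^F f+(\nabla^F f)(h)\cdot\|\text{coefficient }1\|=h\Delta^F_{\mathrm{d}\mu^F_{\mathrm{BH}}}f+(\nabla^F f)(h)$, and $(\nabla^F f)(h)=2c(\nabla^F f)(f)=2c\langle\nabla^F f,\nabla^F f\rangle^F_{\nabla^F f}=2c$ because $f$ is normalized ($F(\nabla^F f)\equiv1$). For the weight term, $\ln\rho=\ln|1+c_0(x)|-(n+1)\ln h$, so $X(\ln\rho)=h(\nabla^F f)(\ln|1+c_0|)-(n+1)h(\nabla^F f)(\ln h)$; the first summand vanishes since $(\nabla^F f)c_0(x)=0$ (noted right after (\ref{020})), and the second equals $-(n+1)h\cdot(2c/h)=-(n+1)\cdot2c$. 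Collecting: $\Psi^*\Delta^{\tilde F}_{\mathrm{d}\mu^F_{\mathrm{BH}}}\tilde f=h\Delta^F_{\mathrm{d}\mu^F_{\mathrm{BH}}}f+2c-2c(n+1)=(2cf+1)\Delta^F_{\mathrm{d}\mu^F_{\mathrm{BH}}}f-2cn$, which is (\ref{021}).

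The one genuine point requiring care — and the step I expect to be the main obstacle — is the bookkeeping in the very first move: one must be sure that $\nabla^{\tilde F}\tilde f$ exists and is the $\Psi$-image of $h\nabla^F f$ as a smooth vector field on a whole neighborhood, not just pointwise along a single geodesic, and that the divergence genuinely transforms naturally under the non-isometric diffeomorphism $\Psi$. The existence and regularity of $\nabla^{\tilde F}\tilde f$ around $x_0$ is exactly Lemma \ref{lemma-12}, so that is already in hand; the identity $\nabla^{\tilde F}\tilde f(\tilde x)=\dot{\tilde\gamma}(t)=\Psi_*((2cf(x)+1)\nabla^F f(x))$ holds at every $\tilde x$ near $x_0$ by (\ref{019}), giving the field-level statement. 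The naturality of $\mathrm{div}$ under diffeomorphisms is a formal consequence of $\varphi^*(L_X\omega)=L_{\varphi^*X}(\varphi^*\omega)$, so no isometry is needed. After that the computation is the routine Leibniz expansion above, mirroring Lemma \ref{lemma-6}, and everything collapses using $F(\nabla^F f)\equiv1$ and $(\nabla^F f)c_0=0$.
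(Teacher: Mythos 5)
Your proposal is correct and follows essentially the same route as the paper's own proof: pull the divergence back through $\Psi$ using naturality of the Lie derivative, substitute (\ref{019}) for the gradient field and (\ref{020}) for the measure, and expand by the Leibniz rule, with the terms collapsing via $F(\nabla^F f)\equiv 1$ and $(\nabla^F f)c_0=0$. The only blemish is the stray ``$\|\text{coefficient }1\|$'' notation, which is harmless since the intended identity $\mathrm{div}_{\mathrm{d}\mu}(h\nabla^F f)=h\,\Delta^F_{\mathrm{d}\mu}f+(\nabla^F f)(h)$ is correct.
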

\begin{proof} The proof uses (\ref{019}), (\ref{020}) and the same calculation as for Lemma 7.4 in \cite{XMYZ2020}, i.e.,
\begin{eqnarray*}
\Psi^*\Delta^{\tilde{F}}_{\mathrm{d} \mu^F_{\mathrm{BH}}}\tilde{f}&=&
\Psi^*\mathrm{div}_{\mathrm{d} \mu^F_{\mathrm{BH}}}(\Psi_*((2cf+1)\nabla^F f))
=\mathrm{div}_{\Psi^* \mathrm{d}\mu^F_{\mathrm{BH}}}((2cf+1)\nabla^F f)\\
&=&\mathrm{div}_{|1+c_0(x)|(2cf+1)^{-n-1}\mathrm{d}\mu^F_{\mathrm{BH}}}
((2cf+1)\nabla^F f)\\
&=&\frac{L_{(2cf+1)\nabla^F f}(|1+c_0(x)|(2cf+1)^{-n-1}\mathrm{d}\mu^F_{\mathrm{BH}})}{
|1+c_0(x)|(2cf+1)^{-n-1}\mathrm{d}\mu^F_{\mathrm{BH}}}\nonumber\\
&=&(\nabla^F f)(2cf+1)+
(2cf+1)\frac{L_{\nabla^F f}(|1+c_0(x)|(2cf+1)^{-n-1}\mathrm{d}\mu^F_{\mathrm{BH}})}{
|1+c_0(x)|(2cf+1)^{-n-1}\mathrm{d}\mu^F_{\mathrm{BH}}}\nonumber\\
&=&2c+(2cf+1)\nabla^F_{\mathrm{d} \mu^F_{\mathrm{BH}}}f+
(2cf+1)(\nabla^F f)(\ln(2cf+1)^{-n-1})\nonumber\\
& &+(2cf+1)(\nabla^F f)(\ln|1+c_0(x)|)\nonumber\\
&=&(2cf+1)\Delta^F_{\mathrm{d} \mu^F_{\mathrm{BH}}}f-2cn,
\end{eqnarray*}
in which $(\nabla^F f)(\ln|1+c_0(x)|)$ vanishes because
$c_0(x)$ is constant on each integral curve of
$\nabla^Ff$.
\end{proof}

Thirdly, we compare $\Delta^F f(x)$ and $\Delta^{\tilde{F}}\tilde{f}(\tilde{x})=(\Psi^*\Delta^{\tilde{F}}\tilde{f})(x)$. We have observed in above discussion that the integral curves of $\nabla^Ff$ and $\nabla^{\tilde{F}}\tilde{f}$ are  the unit speed geodesics  $\gamma(\cdot)$ and $\tilde{\gamma}(\cdot)$  for $F$ and $\tilde{F}$ respectively, so by Lemma \ref{lemma-5} and Lemma \ref{lemma-6}, we have
\begin{eqnarray}
\Delta^F f(x)&=&\Delta^{F}_{\mathrm{d} \mu^F_{\mathrm{BH}}} f(x)+ S^F_{\mathrm{d}\mu^F_{\mathrm{BH}}}(x,\nabla^F f(x)),
\mbox{ and}\label{025}\\
\Delta^{\tilde{F}}\tilde{f}(\tilde{x})&=&\Delta^{\tilde{F}}_{\mathrm{d} \mu^F_{\mathrm{BH}}}
\tilde{f}(\tilde{x})+
S^{\tilde{F}}_{\mathrm{d}\mu^F_{\mathrm{BH}}}(\tilde{x},\nabla^{\tilde{F}} \tilde{f}(\tilde{x})),\label{026}
\end{eqnarray}
in which the S-curvatures are for $\mathrm{d}\mu^F_{\mathrm{BH}}$.
By Lemma \ref{lemma-11} and Lemma \ref{lemma-8},
\begin{eqnarray}
S_{\mathrm{d}\mu^F_{\mathrm{BH}}}^{\tilde{F}}(\tilde{x},\nabla^{\tilde{F}}\tilde{f}(\tilde{x}))
&=&S_{\mathrm{d}\mu^F_{\mathrm{BH}}}^{\tilde{F}}(\Psi_t(\tilde{\gamma}(\alpha_c(t))),
(\Psi_t)_*(\frac{\mathrm{d} }{\mathrm{d} t}\gamma(\alpha_c(t)))+V(\tilde{\gamma}(t)))
\nonumber\\&=&S_{\mathrm{d}\mu^F_{\mathrm{BH}}}^F(\Psi_t(\tilde{\gamma}(\alpha_c(t))),
(\Psi_t)_*(\frac{\mathrm{d} }{\mathrm{d} t}\gamma(\alpha_c(t))))+(n+1)c\nonumber\\
&=&S_{\mathrm{d}\mu^F_{\mathrm{BH}}}^F(\tilde{\gamma}(\alpha_c(t)),\frac{\mathrm{d} }{\mathrm{d} t}\gamma(\alpha_c(t)))
+(n+1)c\nonumber\\&=&e^{2ct}S_{\mathrm{d}\mu^F_{\mathrm{BH}}}^F(x,\nabla^F f(x))+(n+1)c\nonumber\\
&=&(2cf(x)+1)S_{\mathrm{d}\mu^F_{\mathrm{BH}}}^F(x,\nabla^Ff(x))+(n+1)c.\label{027}
\end{eqnarray}
By Lemma \ref{lemma-7} and (\ref{025})-(\ref{027}),
\begin{eqnarray*}
\Delta^{\tilde{F}}\tilde{f}(\tilde{x})&=&\Delta^{\tilde{F}}_{\mathrm{d} \mu^F_{\mathrm{BH}}}\tilde{f}(\tilde{x})
+S_{\mathrm{d}\mu^F_{\mathrm{BH}}}^{\tilde{F}}(\tilde{x},\nabla^{\tilde{F}}\tilde{f}(\tilde{x}))\\
&=&(2cf(x)+1)\Delta^F_{\mathrm{d} \mu^F_{\mathrm{BH}}}f(x)-2nc+(2cf(x)+1)S_{\mathrm{d}\mu^F_{\mathrm{BH}}}^{F}(x,\nabla^F f(x))+(n+1)c\\
&=&(2cf(x)+1)\Delta^F f(x)-(n-1)c.
\end{eqnarray*}
So we have the following lemma
\begin{lemma}\label{lemma-9}
Let $f$ be a normalized transnormal function for $F$ satisfying (\ref{028}), then
\begin{equation}\label{021}
\Psi^*\Delta^{\tilde{F}}
\tilde{f}=(2cf+1)
\Delta^F f-(n-1)c.
\end{equation}
\end{lemma}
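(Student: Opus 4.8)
The plan is to reduce the statement to the two Cartan-type decompositions already in hand — Lemma \ref{lemma-5} applied to $(M,F)$ and $f$, and Lemma \ref{lemma-6} applied to $(M,\tilde F)$ and $\tilde f$ — and then to feed in the measure-dependent comparison of Lemma \ref{lemma-7} together with the S-curvature correspondence of Lemma \ref{lemma-8}. So the assembly is: first split each (unweighted) Laplacian as weighted Laplacian plus S-curvature, then convert the $\tilde F$ S-curvature into an $F$ S-curvature, and finally add everything up.

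First I would record that the hypotheses of Lemmas \ref{lemma-5} and \ref{lemma-6} hold in our situation. By Lemma 4.1 in \cite{Xu2018} the integral curves of $\nabla^F f$ are $F$-unit speed geodesics $\gamma(\cdot)$, and by Lemma \ref{lemma-2} (applied exactly as in the proof of Lemma \ref{lemma-12}) the curves $\tilde\gamma(\cdot)=\Psi(\gamma(\alpha_c(\cdot)))$ are $\tilde F$-unit speed geodesics whose velocities realize $\nabla^{\tilde F}\tilde f$. Hence at $x$ and $\tilde x=\Psi(x)$,
\begin{align*}
\Delta^F f(x) &= \Delta^F_{\mathrm{d}\mu^F_{\mathrm{BH}}} f(x) + S^F_{\mathrm{d}\mu^F_{\mathrm{BH}}}(x,\nabla^F f(x)),\\
\Delta^{\tilde F}\tilde f(\tilde x) &= \Delta^{\tilde F}_{\mathrm{d}\mu^F_{\mathrm{BH}}}\tilde f(\tilde x) + S^{\tilde F}_{\mathrm{d}\mu^F_{\mathrm{BH}}}(\tilde x,\nabla^{\tilde F}\tilde f(\tilde x)).
\end{align*}

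Next I would transform the S-curvature term. Using $\nabla^{\tilde F}\tilde f(\tilde x)=(\Psi_t)_*\bigl((2cf(x)+1)\nabla^F f(x)\bigr)$ from \eqref{019}, the invariance of $S^F_{\mathrm{d}\mu^F_{\mathrm{BH}}}$ under the homothetic translations $\Psi_t$ (Lemma \ref{lemma-11}), the S-curvature correspondence \eqref{010} of Lemma \ref{lemma-8}, and the $1$-homogeneity of the S-curvature in the fiber variable together with $2cf(x)+1=e^{2ct}$, one obtains
$$S^{\tilde F}_{\mathrm{d}\mu^F_{\mathrm{BH}}}(\tilde x,\nabla^{\tilde F}\tilde f(\tilde x))=(2cf(x)+1)\,S^F_{\mathrm{d}\mu^F_{\mathrm{BH}}}(x,\nabla^F f(x))+(n+1)c.$$
The one point to watch here is that $S$ scales like $F$, not like a constant, under $y\mapsto e^{2ct}y$, so the factor $2cf(x)+1$ must be carried through Lemma \ref{lemma-8} and not dropped; this is exactly where the mild bookkeeping lives, and it is essentially the computation \eqref{027}.

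Finally I would substitute Lemma \ref{lemma-7}, namely $\Psi^*\Delta^{\tilde F}_{\mathrm{d}\mu^F_{\mathrm{BH}}}\tilde f=(2cf+1)\Delta^F_{\mathrm{d}\mu^F_{\mathrm{BH}}}f-2cn$, together with the two displays above, into the decomposition of $\Delta^{\tilde F}\tilde f$, getting
$$\Psi^*\Delta^{\tilde F}\tilde f=(2cf+1)\Delta^F_{\mathrm{d}\mu^F_{\mathrm{BH}}}f-2nc+(2cf+1)S^F_{\mathrm{d}\mu^F_{\mathrm{BH}}}(\cdot,\nabla^F f)+(n+1)c=(2cf+1)\Delta^F f-(n-1)c,$$
where I use $-2nc+(n+1)c=-(n-1)c$ and recombine $\Delta^F_{\mathrm{d}\mu^F_{\mathrm{BH}}}f+S^F_{\mathrm{d}\mu^F_{\mathrm{BH}}}(\cdot,\nabla^F f)=\Delta^F f$ via Lemma \ref{lemma-5} again. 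The main (and only) obstacle is tracking the homothety exponents and the non-invariance of $S$ under fiber scaling correctly; once Lemmas \ref{lemma-5}, \ref{lemma-6}, \ref{lemma-7}, \ref{lemma-8}, \ref{lemma-11} are granted, the rest is just this arithmetic.
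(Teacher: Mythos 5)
Your proposal is correct and follows essentially the same route as the paper: decompose the two unweighted Laplacians via Lemmas \ref{lemma-5} and \ref{lemma-6}, convert the $\tilde{F}$ S-curvature term using Lemma \ref{lemma-11}, Lemma \ref{lemma-8} and positive $1$-homogeneity of $S$ (exactly the paper's computation \eqref{027}), then substitute Lemma \ref{lemma-7} and simplify $-2nc+(n+1)c=-(n-1)c$. One small slip: \eqref{019} reads $\nabla^{\tilde{F}}\tilde{f}(\tilde{x})=\Psi_*\bigl((2cf+1)\nabla^F f\bigr)$ with the tangent map of the full map $\Psi$, not $(\Psi_t)_*\bigl((2cf+1)\nabla^F f\bigr)$, which is only the $F$-unit part $y(t)$ and omits the wind $V(\tilde{\gamma}(t))$; since the S-curvature identity you then state coincides with \eqref{027}, this does not affect the rest of your argument.
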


Finally, we summarize above discussion to the proof of Theorem \ref{main-thm}.

\begin{proof}[Proof of Theorem \ref{main-thm}]
Suppose that $f$ is a $\mathrm{d}\mu^F_{\mathrm{BH}}$-isoparametric function for $F$ satisfying (\ref{028}). We see from Lemma \ref{lemma-4} and Lemma \ref{lemma-12} that the function $\tilde{f}$ determined by $\tilde{f}^{-1}(t)=\widetilde{M}_t=\Psi_{t}(M_{\alpha_c(t)})=
\Psi_t(f^{-1}(\alpha_c(t)))$ around $x_0$ is a transnormal function normalized at $x_0$. Further more, since $\Delta^F_{\mathrm{d}\mu^F_{\mathrm{BH}}}f$ is constant on each $M_{\alpha_c(t)}$,
Lemma \ref{lemma-7} indicates that $\Delta^{\tilde{F}}_{\mathrm{d} \mu^F_{\mathrm{BH}}}\tilde{f}$ is constant on each $\widetilde{M}_t$ as well, i.e., $\tilde{f}$ is $\mathrm{d}\mu^F_{\mathrm{BH}}$-isoparametric for $\tilde{F}$. In particular, $\tilde{f}^{-1}(0)=\widetilde{M}_0=\widetilde{M}_0=f^{-1}(0)$
is a $\mathrm{d}\mu^F_{\mathrm{BH}}$-isoparametric hypersurface for the Lorentz Finsler metric $\tilde{F}$.

When $f$ is an isoparametric function for $F$ satisfying (\ref{028}), we use Lemma \ref{lemma-9} instead of Lemma \ref{lemma-7}. The other argument is similar.
\end{proof}
\subsection{Some explicit examples}

Theorem \ref{main-thm} can provide many examples of locally defined isoparametric hypersurfaces for
a Lorentz Finsler metric $\tilde{F}$ induced by homothetic navigation. Here we discuss some examples when $\tilde{F}$ is
a Funk metric of Lorentz Randers type.


Denote $\Omega=\{x\in\mathbb{R}^n\mbox{ with }|x|\leq1\}$,
in which $|x|=\langle x,x\rangle^{1/2}=\sum_{i=1}^n (x^i)^2$ for
$x=(x^1,\cdots,x^n)$ is
the standard Euclidean norm on $\mathbb{R}^n$. Then a Lorentz Finsler metric $\tilde{F}$ on $M=\mathbb{R}^n\backslash\Omega$
can be induced by the navigation process with the datum $(F,V)$, where $F$ is the standard Euclidean metric on $\mathbb{R}^n$ and $V(x)=-x$ is a homothetic field with dilation $\tfrac12$. The metric
$\tilde{F}$ generalizes the Funk metric in Finsler geometry, so we may call it a Lorentz Funk metric. It is of Randers type because it can be explicitly presented as
\begin{eqnarray*}
\tilde{F}(x,y)=\frac{\sqrt{\langle x,y\rangle^2-(|x|^2-1)|y|^2}-\langle x,y\rangle}{|x|^2-1},
\end{eqnarray*}
where $x\in M=\mathbb{R}^n\backslash{\Omega}$, i.e., $|x|>1$ and $y\in \mathcal{A}_x=
\{y|\langle x,y\rangle^2-(|x|^2-1)|y|^2>0\}\subset
\mathbb{R}^n=T_xM$. Since $F$ is Riemannian, the $\mathrm{d}
\mu^F_{\mathrm{BH}}$-isoparametric
properties and isoparametric properties for $\tilde{F}$ coincide.

Let $x_0$ by any point in $M=\mathbb{R}^n\backslash\Omega$ and $N$ an isoparametric hypersurface in $(\mathbb{R}^n,F)$ passing $x_0$, i.e., $N$ is
a hyperplane, a sphere, or a spherical cylinder \cite{Se1926}. Denote $\mathbf{n}$ the $F$-unit normal vector of $N$ at $x_0$. Then Theorem \ref{main-thm} implies that $N$ is isoparametric for $\tilde{F}$ in a sufficiently small neighborhood of $x_0$ when $|\langle x,\mathbf{n}\rangle|>1$. Notice that
there are two isoparametric functions $f_{\pm}$ for $F$ normalized at $x$ with $f^{-1}_{\pm}(0)=N$ and $\nabla^F f_{\pm}(x_0)=\pm\mathbf{n}$. To get the isoparametric function $\tilde{f}$ for $\tilde{F}$ in Theorem \ref{main-thm}, we need to take $f$ from $f_{\pm}$ such that $\langle x_0,\nabla^F f(x_0)\rangle>1$.

Some isoparametric hypersurfaces in $\tilde{F}$ exist globally and the corresponding normalized isoparametric functions can be easily presented.
Fix an arbitrary $a>1$. Then the function $f_1(x)=|x|-a$  is an isoparametric function for $F$ normalized at any point on the sphere $S_o(a)=\{x| |x|=a\}$. By Theorem \ref{main-thm}, $\tilde{f}_1(x)=\ln\frac{a-1}{|x|-1}$
is an isoparametric
function for $\tilde{F}$ in some neighborhood of  $S_o(a)$. So each sphere $S_o(a)$ with
$a>1$ is an isoparametric hypersurface for $\tilde{F}$.
The function $f_2(x)=x^1-a$ for $x=(x^1,\cdots,x^n)$ is another isoparametric function for $F$ normalized at any point on the hyperplane $x^1=a$.
By Theorem \ref{main-thm}, $\tilde{f}_2(x)
=\ln\tfrac{a-1}{x^1-1}$ is an isoparametric function for $\tilde{F}$ in some neighborhood of this hyperplane. So each hyperplane $x^1=a$ with $a>1$ is an isoparametric hypersurface for $\tilde{F}$.
%
\bigskip

\noindent
{\bf Acknowledgement}.
This paper is supported by Beijing Natural Science Foundation
(Z180004), National Natural Science
Foundation of China (11771331, 11821101, 12001007),
Capacity Building for Sci-Tech  Innovation -- Fundamental Scientific Research Funds (KM201910028021), Natural Science Foundation of Anhui province (2008085QA03, 1908085QA03). The first author thank Anhui University and Sichuan University for hospitality during the preparation of this paper. The authors also thanks Qun He and Miguel Angel Javaloyes for helpful discussions and suggestions.

\end{document}